\def\tank#1{\protected@xdef\@thanks{\@thanks
        \protect\footnotetext[0]{#1}}}
\def\bigfoot{

    \@footnotetext}
\newcommand{\ea}{\end{array}}
\newtheorem{thm}{Theorem}[section]
\newtheorem{prop}{Proposition}[section]
\newtheorem{lem}{Lemma}[section]
\newtheorem{defi}{Definition}[section]
\newtheorem{rmk}{Remark}[section]
\numberwithin{equation}{section}
\newenvironment{proof}{Proof.}
\def\esssup {{ \hbox{ ess\ sup} }}
\begin{document}
\title{\Large \bf Stochastic 2D Navier-Stokes equations on time-dependent domains }

\author{{Wei Wang}$^1$\footnote{E-mail:ww12358@mail.ustc.edu.cn}~~~ {Jianliang Zhai}$^1$\footnote{E-mail:zhaijl@ustc.edu.cn}~~~ {Tusheng Zhang}$^{2}$\footnote{E-mail:Tusheng.Zhang@manchester.ac.uk}
\\
 \small  1. Key Laboratory of Wu Wen-Tsun Mathematics CAS, \\
 \small  School of Mathematical Sciences,
 \small  University of Science and Technology of China,\\
 \small  Hefei, Anhui 230026, China.\\
 \small  2. Department of Mathematics, University of Manchester,\\
 \small  Oxford Road, Manchester, M13 9PL, UK.
}
\date{}
\maketitle

\begin{center}
\begin{minipage}{130mm}
{\bf Abstract:}
We establish the existence and uniqueness of solutions to  stochastic 2D Navier-Stokes equations in a time-dependent domain driven by  Brownian motion. A martingale solution is  constructed through domain transformation and appropriate Galerkin approximations on time-dependent spaces. The probabilistic  strong solution follows from the pathwise uniqueness and the Yamada-Watanable theorem.

\vspace{3mm} {\bf Keywords:} Stochastic Navier-Stokes equations; time-dependent domain; tightness; Yamada-Watanabe theorem.

\vspace{3mm} {\bf AMS Subject Classification:}  60H15, 35Q30, 76D05.
\end{minipage}
\end{center}
\begin{section}{Introduction}
Fix $T>0$ and let $\mathcal{O}_T=\bigcup\limits_{t\in[0,T]} \mathcal{D}(t)\times \left\{t\right\}$ be a  non-cylindrical space-time domain, each
 $\mathcal{D}(t),\, t\in[0,T]$, being a bounded open domain in $\mathbb{R}^2$ with smooth boundary $\Gamma(t):=\partial\mathcal{D}(t)$. Let $\hat{\Gamma}_T=\bigcup\limits_{t\in[0,T]} \Gamma(t)\times \left\{t\right\}$ represent the boundary of $\mathcal{O}_T$, and set $\overline{\mathcal{O}}_T=\mathcal{O}_T\bigcup \hat{\Gamma}_T$.

\indent Consider   2D stochastic Navier-Stokes equation with the Dirichlet boundary conditions:
\begin{align}
\mathrm{d}\boldsymbol{u}(t)-\mu \Delta \boldsymbol{u}(t)\mathrm{d}t+\left(\boldsymbol{u}(t)\cdot\nabla\right)\boldsymbol{u}(t)\mathrm{d}t+\nabla p(t)\mathrm{d}t&=\boldsymbol{f}(t)\mathrm{d}t+\boldsymbol{\sigma}(t) \mathrm{d}W(t), &x &\in \mathcal{D}(t),t\in (0,T],\nonumber\\
\mathrm{div}\ \boldsymbol{u}(t)&=0,  & x&\in \mathcal{D}(t),t\in[0,T],\label{add}\\
\boldsymbol{u}(t)&=0,  &x &\in \Gamma(t),t\in[0,T],\nonumber\\
\boldsymbol{u}(x,0)&=\boldsymbol{u}_0(x), & x &\in \mathcal{D}(0),\nonumber
\end{align}
\noindent where $\boldsymbol{u}(x,t)=\left(u^1(x,t),u^2(x,t)\right) :\overline{\mathcal{O}}_T\mapsto \mathbb{R}^2$ and $p(x,t):\overline{\mathcal{O}}_T\mapsto \mathbb{R}$ are the unknown velocity field and pressure of the fluid, respectively; the constant $\mu>0$ is the coefficient of  viscosity, without loss of generality, here we take $\mu=1$; $\boldsymbol{f}(x,t):\overline{\mathcal{O}}_T\mapsto \mathbb{R}^2$ is the  external force and $\boldsymbol{u}_0(x): \mathcal{D}(0)\mapsto \mathbb{R}^2$ is the  initial velocity; $\left\{W(t),t\in [0,T]\right\}$ is a one-dimensional Brownian motion defined on a  stochastic basis $(\Omega,\mathscr{F},(\mathscr{F}_t)_{0\leq t\leq T},\mathbb{P})$, where $(\mathscr{F}_t)_{0\leq t\leq T}$ satisfies the usual conditions. For the precise conditions on $\mathcal{O}_T$, $\boldsymbol{f}$, $\boldsymbol{u}_0$, and $\boldsymbol{\sigma}$, we refer the reader to Section 2 and Section 3.\\
\indent The purpose of this paper is to establish the existence and uniqueness of the  solution  to equation (\ref{add}).

The Navier-Stokes equation is an important  model for atmospheric and ocean dynamics, water flow, and  other viscous flow. It has been extensively  studied by many authors, mainly on  a fixed domain which is independent of time.  For deterministic 2D Navier-Stokes equations, we refer readers to Hopf \cite{EH}, Ladyzhenskaya \cite{OAL}, Leray \cite{JL}, Lions and Prodi \cite{JG}, Temam \cite{RT}, and references therein.

\indent 
Since the seminal work  \cite{BT} by Bensoussan and Temam, a great number of papers have been devoted to the subject of stochastic Navier-Stokes equations. It is still an ongoing very active research area.  We mention some of the relavant works. The well-posedness of stochastic Navier-Stokes equations were studied by Flandoli and Gatarek \cite{FD}, Menaldi and Sritharan \cite{MS}, Liu and R\"{o}ckner \cite{Liu R}. Large deviations for   2D stochastic Navier-Stokes equations were proved by  Chang \cite{MHC}, Sritharan and Sundar \cite{SS}, Wang, Zhai and Zhang \cite{WZZ2015}. Ergodicity of   2D Navier-Stokes equations with stochastic
forcing were studied by Flandoli and Maslowski \cite{FB}, Hairer and Mattingly  \cite{MM}.\\
\indent When modeling  fluid, the area usually changes with time. It is therefore  important to consider Navier-Stokes equations in a time-varying domain. There are a few papers on this topic in the deterministic setting. Applying the penalty methods, Fujita and Sauer \cite{HN}  proved the  existence and uniqueness of the solution to Navier-Stokes equations in domains  with the boundaries covered by  finite number of  $C^3$-class  simple closed surfaces. In Bock \cite{DNB} and Inoue-Wakimoto \cite{IM}, the authors  transformed  Navier-Stokes equations on time-dependent domain into nonlinear partial differential equations on  a cylindrical domain.
The equations in a time-dependent domain with Neumann boundary conditions were considered by Filo and Zau\v{s}kov\'{a} \cite{FZ}. Some related free boundary problems were considered by Bae \cite{HB}, Shibata and Shimizu \cite{YS}.\\
 \indent Taking into account the influence of unknown external forces, which may be caused by environmental noises in both mathematical and physical senses, in this paper we consider   2D stochastic Navier-Stokes equations in a time-dependent domain. To the best of our  knowledge, this is the first result of this kind. The moving boundary of the domain raises serious  difficulties, for example, we can't solve the equation in the usual setting of  a Gelfand triple like in the case of a fixed domain, the It\^{o} formula widely used in the literature is also not available as the state space of the solution changes with time.  Due to the lack of the It\^{o} formula, the  local monotonicity arguments used in the literature for stochastic Navier-Stokes equations fail to work. In this paper, we use finite dimensional approximations. The main difficulty lies in the proof of the tightness of the family of the laws of  the finite dimensional approximations because the state space of the solution changes with time. To overcome this, we need to find a suitable criterion to characterize the compact subsets of the state space of the solution. We  first obtain the existence of the  probabilistic weak solution and then prove the pathwise uniqueness of the solutions. The probabilistic strong solutions are obtained thanks to the Yamada-Watanabe theorem.

\vskip 0.4cm
 \indent The paper is divided into four sections. In Section 2, we  introduce the time-dependent domain and its transformation.  In Section 3,  we  lay out the setup and state the main result. Section 4 is devoted to the proofs of the main results.
\end{section}
\begin{section}{Transformation of domain}
In Euclidean space $\mathbb{R}^n$, we write $\boldsymbol{x}=(x^1,x^2,\cdots,x^n)$ as the points of the space, and $\boldsymbol{x}^{\mathrm T}$ means the transposition of $\boldsymbol{x}$. We will make the following assumptions about the region $\mathcal{O}_T$. Assume that there exists a bounded open domain $\widetilde{\mathcal{D}}\subset \mathbb{R}^2$ satisfying the following condition:
\vskip 0.1cm

(A1) Let $\widetilde{\mathcal{O}}_T=\widetilde{\mathcal{D}}\times [0,T]$, there exists  a level-preserving $C^{\infty}$-diffeomorphism $L$ from  $\overline{\mathcal{O}}_T$(the closure of $\mathcal{O}_T)$ to $\overline{\widetilde{\mathcal{O}}}_T$ (the closure of $ \widetilde{\mathcal{O}}_T$), i.e.,
\begin{align*}
(y,s)=L(x,t)=&(y^1(x,t),y^2(x,t),t),\ \ (x,t)\in \overline{\mathcal{O}}_T,\ (y,s)\in \overline{\widetilde{\mathcal{O}}}_T,
\end{align*}
and
\begin{equation}\label{eq M1}
det\  M(x,t)\equiv J(t)^{-1}>0,\ \ (x,t)\in \overline{\mathcal{O}}_T,
\end{equation}
here
\begin{eqnarray}\label{eq M}
M(x,t)
=
\begin{bmatrix}
\partial y^1(x,t)/\partial x^1&\partial y^2(x,t)/\partial x^1\\
\partial y^1(x,t)/\partial x^2&\partial y^2(x,t)/\partial x^2
\end{bmatrix},\ \ (x,t)\in \overline{\mathcal{O}}_T.
\end{eqnarray}

\indent Let $L^{-1}$ be the inverse of $L$, which means that $L^{-1}:\overline{\widetilde{\mathcal{O}}}_T \mapsto \overline{\mathcal{O}}_T$, and for every $(y,s)\in \overline{\widetilde{\mathcal{O}}}_T$ and $(x,t)\in \overline{\mathcal{O}}_T$ with $(y,s)=L(x,t)$,
\begin{equation*}
L^{-1}(y,s)=L^{-1}L(x,t)=(x,t).
\end{equation*}
Note that the time variable remains unchanged during the transformation, i.e., $s(x,t)=t$.
For each function $\Gamma:\overline{\mathcal{O}}_T\mapsto \mathbb{R}^2$,  $\widetilde{\Gamma}$ will always mean a function mapping $\overline{\widetilde{\mathcal{O}}}_T$ into $\mathbb{R}^2$ obtained by the transformation
\begin{equation}\label{eq Tr}
\widetilde{\Gamma}(y,s)=\Gamma(L^{-1}(y,s))M(L^{-1}(y,s)).
\end{equation}
Conversely, we can recover the function  $\Gamma$ from $\widetilde{\Gamma}$ by setting
$$\Gamma(x,t)=\widetilde{\Gamma}(L(x,t))M(x,t)^{-1}.$$
In the following, we will set
\begin{equation*}
\boldsymbol{\widetilde{u}}(y,s)=\boldsymbol{u}(L^{-1}(y,s))M(L^{-1}(y,s)),
\end{equation*}
\begin{equation*}
\boldsymbol{\widetilde{u}}_0(y,s)=\boldsymbol{u}_0(L^{-1}(y,s))M(L^{-1}(y,s)),
\end{equation*}
\begin{equation*}
\boldsymbol{\widetilde{f}}(y,s)=\boldsymbol{f}(L^{-1}(y,s))M(L^{-1}(y,s)),
\end{equation*}
\begin{equation*}
\boldsymbol{\widetilde{\sigma}}(y,s)=\boldsymbol{\sigma}(L^{-1}(y,s))M(L^{-1}(y,s)).
\end{equation*}
Note that, under this transformation, we have
\begin{lem}\label{lem 3}
$\mathrm{div}\ \Gamma(x,t)=0$, $\forall (x,t)\in \mathcal{O}_T$ if and only if $\mathrm{div}\ \widetilde{\Gamma}(y,s)=0$, $\forall (y,s)\in \widetilde{\mathcal{O}}_T$.
\end{lem}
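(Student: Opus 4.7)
The plan is to verify the lemma by a direct chain-rule calculation, actually proving the stronger pointwise identity $\mathrm{div}_y\widetilde{\Gamma}(y,s)=\mathrm{div}_x\Gamma(L^{-1}(y,s))$; the equivalence then follows immediately since $L$ is a $C^\infty$-diffeomorphism between $\mathcal{O}_T$ and $\widetilde{\mathcal{O}}_T$. The key structural ingredient will be hypothesis (A1), and specifically the fact that $\det M(x,t)\equiv J(t)^{-1}$ depends only on the time variable, not on $x$.

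First, I would rewrite the definition (\ref{eq Tr}) in components as $\widetilde{\Gamma}^j(y,s)=\sum_{i}\Gamma^i(x,t)\,M_{ij}(x,t)$ with $(x,t)=L^{-1}(y,s)$ and $M_{ij}=\partial y^j/\partial x^i$, and then apply the chain rule via $\partial/\partial y^j=\sum_k(M^{-1})_{jk}\,\partial/\partial x^k$, using the elementary identification $(M^{-1})_{jk}=\partial x^k/\partial y^j$. This produces two contributions to $\mathrm{div}_y\widetilde{\Gamma}$: one in which the derivative in $x^k$ falls on $\Gamma^i$, and one in which it falls on $M_{ij}$.

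The first contribution simplifies to $\mathrm{div}_x\Gamma$ at once, because $\sum_j M_{ij}(M^{-1})_{jk}=\delta_{ik}$. For the second contribution I would exploit the symmetry of mixed partial derivatives, $\partial M_{ij}/\partial x^k=\partial^2 y^j/(\partial x^i\partial x^k)=\partial M_{kj}/\partial x^i$, which rewrites its inner double sum as $\mathrm{tr}(M^{-1}\,\partial M/\partial x^i)$; by Jacobi's formula this equals $(\det M)^{-1}\,\partial(\det M)/\partial x^i$. Invoking (A1) — $\det M(x,t)=J(t)^{-1}$ is independent of $x$ — this derivative vanishes identically, so the whole second contribution is zero. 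The pointwise identity follows, and the lemma is immediate.

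The main obstacle is bookkeeping rather than any conceptual difficulty: one must keep careful track of which variable the partial derivatives act on, and must recognize the Jacobi determinantal identity hiding in the unwanted second-order term. Once that identity is in hand, the hypothesis that $\det M$ is spatially constant kills the offending term cleanly and closes the argument in one line.
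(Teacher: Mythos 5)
Your argument is correct, and it proves exactly the same stronger statement as the paper does, namely the pointwise identity $\mathrm{div}_y\,\widetilde{\Gamma}(y,s)=\mathrm{div}_x\,\Gamma(L^{-1}(y,s))$, via a direct chain-rule expansion in which the term where the derivative hits $\Gamma$ collapses to $\mathrm{div}_x\,\Gamma$ through $\sum_j M_{ij}(M^{-1})_{jk}=\delta_{ik}$. Where you genuinely diverge from the paper is in how the remaining term is killed. The paper never differentiates $M$ in $x$: it inverts $M^{\mathrm T}=K^{-1}$ explicitly via the $2\times 2$ cofactor formula (\ref{eq Ainver}), writes each entry $\partial y^j/\partial x^k$ as $\pm J(t)^{-1}\,\partial x^l/\partial y^m$ as in (\ref{eq MA}), and then the unwanted sum $\sum_{j,k}\Gamma^k\,\partial(\partial y^j/\partial x^k)/\partial y^j$ cancels pairwise by equality of mixed $y$-partials of $x$, using that $J(t)^{-1}$ can be pulled out of the $y$-derivatives. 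You instead identify the unwanted term as $\sum_i\Gamma^i\,\mathrm{tr}\bigl(M^{-1}\,\partial M/\partial x^i\bigr)=\sum_i\Gamma^i\,(\det M)^{-1}\partial(\det M)/\partial x^i$ via symmetry of mixed $x$-partials and Jacobi's formula, and then invoke (\ref{eq M1}) to conclude it vanishes. Both routes hinge on the same structural input from (A1) — that $\det M\equiv J(t)^{-1}$ is spatially constant — but your version is dimension-independent and avoids the two-dimensional adjugate algebra entirely, at the cost of importing Jacobi's determinant formula, whereas the paper's computation is elementary and self-contained but tied to $d=2$; it is essentially the planar Piola-type cancellation written out by hand. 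The only points worth making explicit if you write this up are the ones you implicitly use: the transformation is level-preserving ($s=t$, so $\partial t/\partial y^j=0$ and the spatial chain rule is legitimate), $L$ is $C^\infty$ so the mixed partials commute, and $\det M>0$ so $M^{-1}$ exists and Jacobi's formula applies.
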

\begin{proof}
Indeed,  for each $(y,s)\in \widetilde{\mathcal{O}}_T$ and $(x,t)\in {\mathcal{O}}_T$ such that
$
(y,s)=L(x,t),
$ we recall
\begin{equation*}
\widetilde{\Gamma}(y,s)=\Gamma(x,t)M(x,t).
\end{equation*}
If we set
\begin{eqnarray}\label{eq A}
K
=
\frac{\partial(x^1,x^2)}{\partial(y^1,y^2)}
=
\begin{bmatrix}
\partial x^1/\partial y^1&\partial x^1/\partial y^2\\
\partial x^2/\partial y^1&\partial x^2/\partial y^2
\end{bmatrix},
\end{eqnarray}
then
\begin{eqnarray*}
KM^{\mathrm T}
=
\begin{bmatrix}
\sum\limits^{2}_{k=1}(\partial x^1/\partial y^k)(\partial y^k/\partial x^1) &\sum\limits^{2}_{k=1}(\partial x^1/\partial y^k)(\partial y^k/\partial x^2)\\
\sum\limits^{2}_{k=1}(\partial x^2/\partial y^k)(\partial y^k/\partial x^1)
&\sum\limits^{2}_{k=1}(\partial x^2/\partial y^k)(\partial y^k/\partial x^2)
\end{bmatrix}
=
\begin{bmatrix}
\partial x^1/\partial x^1  &\partial x^1/\partial x^2\\
\partial x^2/\partial x^1
&\partial x^2/\partial x^2
\end{bmatrix}=I.
\end{eqnarray*}
This means that   $M^{\mathrm T}$ is the inverse of $K$, i.e.,
\begin{eqnarray}\label{eq Ainver}
M^{\mathrm T}
=
K^{-1}
=
\frac{1}{det K}
\begin{bmatrix}
\partial x^2/\partial y^2&-\partial x^1/\partial y^2\\
-\partial x^2/\partial y^1&\partial x^1/\partial y^1
\end{bmatrix},
\end{eqnarray}
where
\begin{equation*}
det K=\frac{1}{det M}=J(t).
\end{equation*}
The equation (\ref{eq Ainver}) illustrates that
\begin{eqnarray*}
M
=
\left(J(t)^{-1}\right)
\begin{bmatrix}
\partial x^2/\partial y^2&-\partial x^2/\partial y^1\\
-\partial x^1/\partial y^2&\partial x^1/\partial y^1
\end{bmatrix},
\end{eqnarray*}\
namely,
\begin{align}\label{eq MA}
\partial y^1/\partial x^1&=\left(J(t)^{-1}\right) \partial x^2/\partial y^2, \nonumber \\
\partial y^2/\partial x^1&=-\left(J(t)^{-1}\right) \partial x^2/\partial y^1, \nonumber \\
\partial y^1/\partial x^2&=-\left(J(t)^{-1}\right) \partial x^1/\partial y^2,  \\ \nonumber
\partial y^2/\partial x^2&=\left(J(t)^{-1}\right) \partial x^1/\partial y^1.
\end{align}
Apply equation (\ref{eq MA}) in the following  calculation to obtain
\begin{align*}
\mathrm{div}\ \widetilde{\Gamma}(y,s)=&\sum_{j=1}^{2}\partial \widetilde{\Gamma}^{j}(y,s)/\partial y^{j}\\=&\sum_{j=1}^{2}\sum_{k=1}^{2}[(\partial (\partial y^j/\partial x^k)/\partial y^j) \Gamma^k(x,t)+(\partial y^j/\partial x^k )(\partial \Gamma^k(x,t)/\partial y^j)]\\
=&\sum_{j=1}^{2}\sum_{k=1}^{2} (\partial y^j/\partial x^k)( \partial \Gamma^k(x,t)/\partial y^j)\\
&+ \left[\Gamma^1(x,t)\left\{\partial\left(J(t)^{-1} (\partial x^2/\partial y^2)\right)/\partial y^1\right\} -\Gamma^1(x,t)\left\{\partial\left(J(t)^{-1} (\partial x^2/\partial y^1)\right)/\partial y^2\right\}\right]\\
&+\left[\Gamma^2(x,t)\left\{\partial\left(J(t)^{-1} (\partial x^1/\partial y^2)\right)/\partial y^1\right\}-\Gamma^2(x,t)\left\{\partial\left(J(t)^{-1} (\partial x^1/\partial y^1)\right)/\partial y^2\right\}\right]\\
=&\sum_{k=1}^{2}  \partial \Gamma^k(x,t)/\partial x^k=\mathrm{div}\ \Gamma(x,t).
\end{align*}
The proof of Lemma \ref{lem 3} is complete.
\end{proof}
\vskip 0.4cm

Set $\widetilde{p}(y,s)=p(L^{-1}(y,s))$. The transformation $\widetilde{\cdot}$ and the above lemma imply that $(\ref{add})$ can be transformed into the following problem on $\widetilde{\mathcal{O}}_T:$
\begin{align}
\mathrm{d}\widetilde{\boldsymbol{u}}(s)-F \widetilde{\boldsymbol{u}}(s)\mathrm{d}s\,+\,&G \widetilde{\boldsymbol{u}}(s)\mathrm{d}s +\mathcal{N}\widetilde{\boldsymbol{u}}(s)\mathrm{d}s+\nabla_h \widetilde{p}(s)\mathrm{d}s \label{traeq} \\
   & =\widetilde{\boldsymbol{f}}(s)\mathrm{d}s+\widetilde{\boldsymbol{\sigma}}(s) dW(s),\ \ \ \ y \in  \widetilde{\mathcal{D}},s\in(0,T],\nonumber\\
\mathrm{div}\ \widetilde{\boldsymbol{u}}(s)&=0, \ \ \ \ \ \ \ \ \ \ \ \ \ \ \ \ \ \ \ \ \ \ \ \ \ \ \ \ \ \ y \in  \widetilde{\mathcal{D}},s\in[0,T],\nonumber\\
\widetilde{\boldsymbol{u}}(s)&=0, \ \ \ \ \ \ \ \ \ \ \ \ \ \ \ \ \ \ \ \ \ \ \ \ \ \ \ \ \ \ y  \in \partial  \widetilde{\mathcal{D}} ,s\in[0,T],\nonumber\\
\widetilde{\boldsymbol{u}}(y,0)&=\widetilde{\boldsymbol{u}}_0(y), \ \ \ \ \ \ \ \ \ \ \ \ \ \ \ \ \ \ \ \ \ \ \ \  y \in  \widetilde{\mathcal{D}},\nonumber
\end{align}
where, for $i=1,2,$
\begin{align*}
(F \widetilde{\boldsymbol{u}})^{i}&=\sum_{j=1}^{2}\sum_{k=1}^{2} h^{jk}\nabla_j \nabla_k \widetilde{u}^i,\\
(G \widetilde{\boldsymbol{u}})^{i}&=\sum_{j=1}^{2}(\partial y^j/\partial t)\nabla_j \widetilde{u}^i+\sum_{j=1}^{2}\sum_{k=1}^{2}(\partial y^i/\partial x^k)(\partial^2 x^k/\partial s\partial y^j) \widetilde{u}^j,\\
(\mathcal{N}\widetilde{\boldsymbol{u}})^i&=\sum_{j=1}^{2} \widetilde{u}^j\nabla_j \widetilde{u}^i,\\
(\nabla_h \widetilde{p})^{i}&=\sum_{j=1}^{2}h^{ij}(\partial \widetilde{p}/\partial y^j),
\end{align*}
and, for $i,j\in \{1,2\}$,
\begin{align*}
h^{ij}&=\sum_{k=1}^{2}(\partial y^i/\partial x^k)(\partial y^j/\partial x^k),\ \  h_{ij}=\sum_{k=1}^{2}(\partial x^k/\partial y^i)(\partial x^k/\partial y^j),\\
\nabla_j \widetilde{u}^i&=\partial \widetilde{u}^i/\partial y^j+\sum_{k=1}^{2}\Phi_{jk}^i \widetilde{u}^k,\\
\nabla_j \nabla_k \widetilde{u}^i &=\partial (\nabla_k\widetilde {u}^i)/\partial y^j+\sum_{l=1}^{2}\Phi_{jl}^{i} \nabla_{k} \widetilde{u}^l-\sum_{l=1}^{2}\Phi_{jk}^{l} \nabla_{l} \widetilde{u}^i,\\
2\Phi_{ij}^{k}&=\sum_{l=1}^{2}h^{kl}(\partial h_{il}/\partial y^j+\partial h_{jl}/\partial y^i-\partial h_{ij}/\partial y^l)\\
  &=2\sum_{l=1}^{2}(\partial y^k/\partial x^l)(\partial^2 x^l/\partial y^j\partial y^i).
\end{align*}
\begin{rmk}
Set $H_1=(h^{ij})_{2\times 2}$ and  $H_2=(h_{ij})_{2\times 2}$. The condition (A1) implies that
 \begin{equation*}
H_1=M^{\mathrm T} M, \quad H_1^{-1}=H_2,\quad det\ H_2=J(t)^2.
\end{equation*}
\end{rmk}
\end{section}
\begin{section}{Statement of the main result}
In this section we introduce the precise definition of solutions and state the main result.
We begin with some notations. Let $C_{0}^{\infty}(\widetilde{\mathcal{D}})$ be the space of all $\mathbb{R}$-valued $C^{\infty}$ functions on $\widetilde{\mathcal{D}}$ with compact supports. We denote by  $\mathbb{L}^2(\widetilde{\mathcal{D}})$ and $\mathbb{H}_0^1(\widetilde{\mathcal{D}})$ the closure of $C_{0}^{\infty}(\widetilde{\mathcal{D}})$ under the following norms:
\begin{equation}
\|\boldsymbol{u}\|_{\mathbb{L}^2(\widetilde{\mathcal{D}})}=\left(\int_{\widetilde{\mathcal{D}}} |\boldsymbol{u}(y)|^2 \mathrm{d} y\right)^{1/2},
\end{equation}
\begin{equation}
\|\boldsymbol{u}\|_{\mathbb{H}_0^1(\widetilde{\mathcal{D}})} =\left(\int_{\widetilde{\mathcal{D}}} |\nabla \boldsymbol{u}(y)|^2 \mathrm{d} y \right)^{1/2}.
\end{equation}
Introduce  the spaces
\begin{align*}
\mathscr{C}&=\{\boldsymbol u\in (C_{0}^{\infty}(\widetilde{\mathcal{D}}))^2:\mathrm{div}\ \boldsymbol{u}=0\},\\
\widetilde{V}&=the\ closure \ of \ \mathscr{C} \ in\  (\mathbb{H}_0^1(\widetilde{\mathcal{D}}))^{2},\\
\widetilde{H}&=the\ closure \ of \ \mathscr{C} \ in\  (\mathbb{L}^2(\widetilde{\mathcal{D}}))^{2}.
\end{align*}
For fixed $t\in [0,T]$, similarly we can define the spaces  $H_t$ and $V_t$ on the domain $\mathcal{D}(t)$:
\begin{align*}
\mathscr{C}_t &=\{\boldsymbol u\in (C_{0}^{\infty}(\mathcal{D}(t)))^2:\mathrm{div}\ \boldsymbol{u}=0\},\\
V_t &=the\ closure \ of \ \mathscr{C}_t \ in\  (\mathbb{H}_0^1(\mathcal{D}(t))^{2},\\
H_t &=the\ closure \ of \ \mathscr{C}_t \ in\  (\mathbb{L}^2(\mathcal{D}(t))^{2}.
\end{align*}

  The corresponding inner products on $H_t$ and $V_t$ are defined as
\begin{equation}\label{eq 3.11}
(\boldsymbol{u},\boldsymbol{v})_t= \int_{\mathcal{D}(t)} \boldsymbol{u}(x)\boldsymbol{v}^{\mathrm {T}}(x)\mathrm{d}x=\sum_{i=1}^{2}\int_{\mathcal{D}(t)} u^{i}(x)v^{i} (x)\mathrm{d}x,
\end{equation}
\begin{equation}
(\nabla \boldsymbol{u},\nabla \boldsymbol{ v})_t= \sum_{i=1}^{2}\sum_{j=1}^{2} \int_{\mathcal{D}(t)}  (\partial u^i/\partial x^j)(\partial v^i/\partial x^j)\mathrm{d}x.
\end{equation}
For every $ t\in[0,T]$,  $\widetilde{H}$ is a Hilbert space with the inner product
\begin{equation}
\left<\boldsymbol{\widetilde{u}},\boldsymbol{\widetilde{v}}\right>_t= \int_{\widetilde{\mathcal{D}}}\boldsymbol{\widetilde{u}}(y) H_2(y,t) \boldsymbol{\widetilde{v}}^{\mathrm {T}}(y) J(t)\mathrm{d}y,\label{norm1}
\end{equation}\
i.e.
\begin{equation}
\left<\boldsymbol{\widetilde{u}},\boldsymbol{\widetilde{v}}\right>_t=\sum_{i=1}^{2}\sum_{j=1}^{2} \int_{\widetilde{\mathcal{D}}} h_{ij}(y,t) \widetilde{u}^{i}(y)\widetilde{v}^{j}(y) J(t) \mathrm{d}y.
\end{equation}
Similarly, we can define the inner product on $\widetilde{V}$  as follows
\begin{equation}\label{Norm V}
\left<\nabla_h \boldsymbol{\widetilde{u}},\nabla_h \boldsymbol{ \widetilde{v}}\right>_t= \sum_{i=1}^{2}\sum_{j=1}^{2}\sum_{k=1}^{2}\sum_{l=1}^{2}  \int_{\widetilde{\mathcal{D}}} h_{ij}(y,t)h^{kl}(y,t) \nabla_k\widetilde{u}^{i}(y) \nabla_l\widetilde{v}^{j}(y) J(t) \mathrm{d}y,\ t\in[0,T].
\end{equation}

\begin{rmk}
$\widetilde{H}$ and $\widetilde{V}$ are usually equipped with the following inner products, respectively:
\begin{equation}
\left<\boldsymbol{\widetilde{u}},\boldsymbol{\widetilde{v}}\right>_{\widetilde{H}}= \int_{\widetilde{\mathcal{D}}}\boldsymbol{\widetilde{u}}(y) \boldsymbol{\widetilde{v}}^{\mathrm {T}}(y) \mathrm{d}y,
\end{equation}
\begin{equation}
\left<\boldsymbol{ \widetilde{u}},\boldsymbol{ \widetilde{v}}\right> _{\widetilde{V}}=\int_{\widetilde{\mathcal{D}}}\nabla\boldsymbol{\widetilde{u}}(y) (\nabla\boldsymbol{\widetilde{v}})^{\mathrm {T}}(y) \mathrm{d}y.
\end{equation}
The condition (A1) implies that the norms induced by the above two inner products  are equivalent to that induced by (\ref{norm1}) and (\ref{Norm V}), respectively.
\end{rmk}


 After a change of variable, we see that
 \begin{eqnarray}\label{eq 1}
 (\boldsymbol{u},\boldsymbol{v})_t=\left<\boldsymbol{\widetilde{u}},\boldsymbol{\widetilde{v}}\right>_t,\ \ \text{and }
 (\nabla \boldsymbol{ u},\nabla \boldsymbol{ v})_t=\left<\nabla_h  \boldsymbol{\widetilde{u}},\nabla_h \boldsymbol{ \widetilde{v}}\right>_t,\ \forall t\in [0,T].
 \end{eqnarray}
The following notations will be used.
\begin{align*}
|\boldsymbol{\widetilde{u}}|_{t}&=\left<\boldsymbol{\widetilde{u}},\boldsymbol{\widetilde{u}}\right>_t^{1/2}, \quad \emph{\text{for each }} \boldsymbol{\widetilde{u}}\in  \widetilde{H};\\
|\nabla_h  \boldsymbol{\widetilde{u}}|_{t}&=\left<\nabla_h \boldsymbol{\widetilde{u}},\nabla_h \boldsymbol{ \widetilde{u}}\right>_t^{1/2}, \quad \emph{\text{for each }} \   \boldsymbol{\widetilde{u}}\in  \widetilde{V};\\
\|\boldsymbol{u}\|_{t}&=\left(\boldsymbol{u},\boldsymbol{u}\right)_t^{1/2}, \quad \emph{\text{for each }}   \boldsymbol{u}\in  H_t;\\
\|\nabla \boldsymbol{ u}\|_{t}&=(\nabla \boldsymbol{ u},\nabla \boldsymbol{ u})_t^{1/2}, \quad \emph{\text{for each }}  \boldsymbol {u}\in  V_t.
\end{align*}

Identifying $(\widetilde{H},\left<\cdot,\cdot\right>_{\widetilde{H}})$ with its dual space $\widetilde{H}^{*}=(\widetilde{H},\left<\cdot,\cdot\right>_{\widetilde{H}})$, we denote by $\widetilde{V}^{*}$  the dual spaces of $\widetilde{V}$. The corresponding norm in $\widetilde{V}^{*}$ is given by
\begin{equation}\label{eq V star}
\|\boldsymbol{\widetilde{u}}\|_{\widetilde{V}^{*}}=\sup_{\left<\boldsymbol{ \widetilde{v}},\boldsymbol{ \widetilde{v}}\right> _{\widetilde{V}} \leq 1,\boldsymbol{\widetilde{v}}\in \widetilde{V}}\left<\boldsymbol{\widetilde{u}},\boldsymbol{\widetilde{v}}\right>_{\widetilde{H}}.
\end{equation}

The spaces $\mathbb{L}^2([0,T];\widetilde{V})$ and $\mathbb{L}^2([0,T];\widetilde{V}^*)$ are defined as
\begin{equation*}
\mathbb{L}^2([0,T];\widetilde{V})=\left\{\boldsymbol{v}=\left\{\boldsymbol{v}(t)\in \widetilde{V}, t\in[0,T]\right\}\big|\int_{0}^{T} \big(\|\boldsymbol{v}(t)\|_{\widetilde{V}}\big)^2\mathrm{d}t<\infty \right\},
\end{equation*}
\begin{equation*}
\mathbb{L}^2([0,T];\widetilde{V}^{*})=\left\{\boldsymbol{v}
=
\left\{\boldsymbol{v}(t)\in \widetilde{V}^{*}, t\in[0,T]\right\}\big|\int_{0}^{T} \big(\|\boldsymbol{v}(t)\|_{\widetilde{V}^{*}}\big)^{2}\mathrm{d}t<\infty \right\}.
\end{equation*}
$C([0,T];\widetilde{H})$ is the space of continuous functions from the closed interval $[0,T]$ to the Hilbert space $(\widetilde{H},\left<\cdot,\cdot\right>_{\widetilde{H}})$.

For each $t\in [0,T]$, identifying $(\widetilde{H},\left<\cdot,\cdot\right>_t)$ with its dual space $\widetilde{H}^{*}_t=(\widetilde{H},\left<\cdot,\cdot\right>_t)$, we denote by $\widetilde{V}_t^{*}$  the dual spaces of $\widetilde{V}$. The corresponding norm in $\widetilde{V}_t^{*}$ is given by
\begin{equation}
|\boldsymbol{\widetilde{u}}|_t^{*}=\sup_{|\nabla_h \boldsymbol{\widetilde{v}}|_{t} \leq 1,\boldsymbol{\widetilde{v}}\in \widetilde{V}}\left<\boldsymbol{\widetilde{u}},\boldsymbol{\widetilde{v}}\right>_{t}.
\end{equation}
Similarly for each $t\in [0,T]$, identifying $(H_t,\left(\cdot,\cdot\right)_t)$ with its dual space $H^{*}_t=(H_t,\left(\cdot,\cdot\right)_t)$, we denote by ${V}_t^{*}$  the dual spaces of $V_t$. The corresponding norm in ${V}_t^{*}$ is given by
\begin{equation}
\|\boldsymbol{u}\|_t^{*}=\sup_{\|\nabla \boldsymbol{ v}\|_{t} \leq 1,\boldsymbol{v}\in V_t} \left(\boldsymbol{u},\boldsymbol{v}\right)_t.
\end{equation}
\end{section}

\noindent For fixed $t\in [0,T]$, we denote by $\mathbb{\pi}_t$  the orthogonal projection from $\mathbb{L}^2 (\mathcal{D}_t;\mathbb{R}^2)$
to $H_t$, and define $A_t$, $B_t$, $b_t$ as follows: for any $\boldsymbol{u},\boldsymbol{v},\boldsymbol{w}\in V_t$,
\begin{equation*}
A_t:V_t\mapsto V_t^{*},\quad A_t\boldsymbol{u}=-\pi_t \Delta\boldsymbol{u};
\end{equation*}
\begin{equation*}
B_t:V_t\times V_t \mapsto V_t^{*},\quad B_t(\boldsymbol{u},\boldsymbol{v})=\pi_t (\boldsymbol{u}\cdot\nabla\boldsymbol{v}),\ \ \ B_t(\boldsymbol{u})=B_t(\boldsymbol{u},\boldsymbol{u});
\end{equation*}
\begin{equation*}
b_t(\boldsymbol{u},\boldsymbol{v},\boldsymbol{w})=\ _{V_t^{*}}\left<B_t(\boldsymbol{u},\boldsymbol{v}),\boldsymbol{w}\right>_{V_t}=\sum_{i,j=1}^{2} \int_{\mathcal{D}(t)}u^i\partial_i v^j w^j \ \mathrm{d}x.
\end{equation*}
We have the estimates for $b_t(\boldsymbol{u},\boldsymbol{v},\boldsymbol{w})$ (see \cite[Lemma 3.4 in Chapter III]{RT}):
\begin{equation}\label{eq b}
|b_t(\boldsymbol{u},\boldsymbol{v},\boldsymbol{w})|\leq C_1 \|\boldsymbol{u}\|_t^{1/2} \|\nabla \boldsymbol{u}\|_{t}^{1/2} \|\nabla \boldsymbol{v}\|_{t} \|\boldsymbol{w}\|_t^{1/2} \|\nabla \boldsymbol{w}\|_{t}^{1/2},\ \boldsymbol{u},\boldsymbol{v},\boldsymbol{w}\in V_t.
\end{equation}
\noindent Taking into account the property: $b_t(\boldsymbol{u},\boldsymbol{v},\boldsymbol{w})=-b_t(\boldsymbol{u},\boldsymbol{w},\boldsymbol{v})$, we have
\begin{equation}
|\ _{V_{t}^{*}}\left<B_t(\boldsymbol{u})-B_t(\boldsymbol{v}),\boldsymbol{u}-\boldsymbol{v}\right>_{V_t}|\leq \frac{1}{2} \|\nabla(\boldsymbol{u}-\boldsymbol{v})\|^{2}_{t}+C_2\| \boldsymbol{u}-\boldsymbol{v}\|^{2}_{t}\|\boldsymbol{v}\|_{\mathbb{L}^4(\mathcal{D}(t);\mathbb{R}^2)}^4,
\end{equation}
\begin{equation}
\ _{V_{t}^{*}}\left<A_t\boldsymbol{u}-A_t\boldsymbol{v}+B_t(\boldsymbol{u})-B_t(\boldsymbol{v}),\boldsymbol{u}-\boldsymbol{v}\right>_{V_t}+C_2\| \boldsymbol{u}-\boldsymbol{v}\|^{2}_{t}\|\boldsymbol{v}\|_{\mathbb{L}^4(\mathcal{D}(t))}^4\geq \frac{1}{2} \|\nabla(\boldsymbol{u}-\boldsymbol{v})\|^{2}_{t}.\label{monotonicity}
\end{equation}
Similarly, we denote
\begin{equation*}
\mathcal{N}(\boldsymbol{\widetilde{u}},\boldsymbol{\widetilde{v}})= \sum_{j=1}^{2} \widetilde{u}^j\nabla_j \widetilde{v}, \ \
\mathcal{N}(\boldsymbol{\widetilde{u}})=\mathcal{N}(\boldsymbol{\widetilde{u}},\boldsymbol{\widetilde{u}}),
\end{equation*}
where $\nabla_j $ and $\mathcal{N}(\boldsymbol{\widetilde{u}})$ are  consistent with the notations in $(\ref{traeq})$.

The spaces $\mathbb{L}^2([0,T];V_{t}^{*})$, $\mathbb{L}^p([0,T];H_{t})$, and
 $\mathbb{L}^{\infty} ([0,T];H_t)$ are defined as
\begin{equation*}
\mathbb{L}^2([0,T];V_{t}^{*})=\left\{\boldsymbol{v}=\left\{\boldsymbol{v}(t)\in V_{t}^{*}, t\in[0,T]\right\}\big|\int_{0}^{T} \big(\|\boldsymbol{v}(t)\|_t^{*}\big)^2\mathrm{d}t<\infty \right\},
\end{equation*}
\begin{equation*}
\mathbb{L}^p([0,T];H_{t})=\left\{\boldsymbol{v}
=
\left\{\boldsymbol{v}(t)\in H_{t}, t\in[0,T]\right\}\big|\int_{0}^{T} \big(\|\boldsymbol{v}(t)\|_t\big)^{p}\mathrm{d}t<\infty \right\},
\end{equation*}
\begin{equation*}
\mathbb{L}^{\infty} ([0,T];H_t)=\Big\{\boldsymbol{v}=\left\{\boldsymbol{v}(t)\in H_{t}, t\in[0,T]\right\}\big| \esssup_{t\in [0,T]} \ \|\boldsymbol{v}(t)\|_t<\infty \Big\}.
\end{equation*}


Now  we define the  solution of  (\ref{add}).
\begin{defi}\label{def solution}
Let $p>2$. For  $\boldsymbol{u}_0\in H_0,\boldsymbol{f}\in \mathbb{L}^2([0,T];V_{t}^{*})$ and $\boldsymbol{\sigma}\in \mathbb{L}^p([0,T];H_{t})$, we call a stochastic process $\boldsymbol{u}$ a solution of (\ref{add}), if
\begin{description}
\item[(i)] $\boldsymbol{u}\in \mathbb{L}^{\infty} ([0,T];H_t)\cap \mathbb{L}^2 ([0,T];V_t),\ \mathbb{P}\texttt{-}a.s.;$
\item[(ii)] $\boldsymbol{u}$ is $(\mathscr{F}_t)_{0\leq t\leq T}$-adapted;
\item[(iii)] for all $\boldsymbol{\varphi}\in \mathscr {C}_{\sigma}(\mathcal{O}_T)= \{\boldsymbol{v}\in (C_0^\infty(\mathcal{O}_{T}))^2\,|\, \mathrm{div}\ \boldsymbol{v} =0$, $\boldsymbol{v} (T)=0$\},
\begin{align}
&-\int_{0}^{T}\left(\boldsymbol{u}(s),\boldsymbol{\varphi}^{\prime}(s)\right)_s\mathrm{d} s +\int_{0}^{T}\left(\nabla\boldsymbol{u} (s), \nabla\boldsymbol{\varphi}(s)\right)_s\mathrm{d}s-\int_{0}^{T}\left((\boldsymbol{u}(s)\cdot\nabla)\boldsymbol{\varphi}(s),  \boldsymbol{u}(s)\right)_s\mathrm{d}s\nonumber\\
&\ =\left(\boldsymbol{u}_0,\boldsymbol{\varphi}(0)\right)_0+\int_{0}^{T}\left(\boldsymbol{f}(s),\boldsymbol{\varphi}(s)\right)_s\mathrm{d}s+\int_{0}^{T}\left(\boldsymbol{\sigma}(s),\boldsymbol{\varphi}(s)\right)_s\mathrm{d}W(s).\label{solution}
\end{align}
\end{description}
\end{defi}
\begin{rmk}\label{Rem 3.2} Set $\mathscr {C}_{\sigma}(\widetilde{\mathcal{O}}_T)= \{\widetilde{\boldsymbol{\chi}}\in (C_0^\infty(\widetilde{\mathcal{O}}_{T}))^2\,|\,   \mathrm{div}\  \widetilde{\boldsymbol{\chi}} =0,\ \widetilde{\boldsymbol{\chi}} (T)=0\}$. Let
$$
\mathscr{C}_1=\{\widetilde{\boldsymbol{v}}\,|\,\boldsymbol{v}\in \mathscr {C}_{\sigma}(\mathcal{O}_T),\ \widetilde{\boldsymbol{v}}(y,s)=\boldsymbol{v}(L^{-1}(y,s))M(L^{-1}(y,s)),\ (y,s)\in \widetilde{\mathcal{O}}_T\},
$$
 and
$$
\mathscr{C}_2=\{\boldsymbol{\chi}\,|\,\widetilde{\boldsymbol{\chi}}\in \mathscr {C}_{\sigma}(\widetilde{\mathcal{O}}_T),\ \boldsymbol{\chi}(x,t)=\widetilde{\boldsymbol{\chi}}(L(x,t))M(x,t)^{-1},\ (x,t)\in \mathcal{O}_T\}.
$$
The condition (A1), (\ref{eq M1}) and Lemma \ref{lem 3} imply that
$$
\mathscr{C}_1=\mathscr {C}_{\sigma}(\widetilde{\mathcal{O}}_T) \text{ and } \mathscr{C}_2=\mathscr {C}_{\sigma}({\mathcal{O}}_T).
$$
The above fact implies that (\ref{solution}) is equivalent to  that
 $\boldsymbol{\widetilde{u}}(y,s)=\boldsymbol{u}(L^{-1}(y,s))M(L^{-1}(y,s)),\\~ (y,s)\in \widetilde{\mathcal{O}}_T$ satisfies the following identity: for any $\widetilde{\boldsymbol{\varphi}}\in \mathscr {C}_{\sigma}(\widetilde{\mathcal{O}}_T)$,
\begin{align}
&-\int_{0}^{T}\!\!\!\! \left<\widetilde{\boldsymbol{u}}(s),\widetilde{\boldsymbol{\varphi}}^{\prime}(s)\right>_s \!\mathrm{d}s
\!-\!\int_{0}^{T}\!\!\!\! \left<\widetilde{\boldsymbol{u}}(s),G\widetilde{\boldsymbol{\varphi}}(s)\right>_s \!\mathrm{d}s
\!+\!\int_{0}^{T}\!\!\!\! \left<\nabla_h \widetilde{\boldsymbol{u}}(s),\nabla_h\widetilde{\boldsymbol{\varphi}}(s)\right>_s \!\mathrm{d}s
\!+\!\int_{0}^{T}\!\!\!\! \left<\mathcal{N}(\widetilde{\boldsymbol{u}}(s)),\widetilde{\boldsymbol{\varphi}}(s)\right>_s\!\mathrm{d}s\nonumber \\
&=\left<\widetilde{\boldsymbol{u}}_0,\widetilde{\boldsymbol{\varphi}}(0)\right>_0+\int_{0}^{T} \left<\widetilde{\boldsymbol{f}}(s),\widetilde{\boldsymbol{\varphi}}(s)\right>_s \mathrm{d}s+\int_{0}^{T} \left<\widetilde{\boldsymbol{\sigma}}(s),\widetilde{\boldsymbol{\varphi}}(s)\right>_s \mathrm{d}W(s).
\end{align}

\end{rmk}
Now we are in a position to state the main result.
\begin{thm}\label{thm1}
Let $p>2$. For $\boldsymbol{u}_0\in H_0,\boldsymbol{f}\in \mathbb{L}^2([0,T];V_{t}^{*})$ and $\boldsymbol{\sigma}\in \mathbb{L}^p([0,T];H_{t})$,
there exists a unique solution to the stochastic Navier-Stokes equation  (\ref{add}).
\end{thm}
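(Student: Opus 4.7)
The plan is to exploit Remark \ref{Rem 3.2}: solving (\ref{add}) is equivalent to solving the transformed equation (\ref{traeq}) on the fixed cylindrical domain $\widetilde{\mathcal{O}}_T$, whose underlying Hilbert space is the fixed space $\widetilde{H}$ equipped, however, with the time-dependent inner products $\langle\cdot,\cdot\rangle_t$. First, I fix an orthonormal basis $\{\boldsymbol{e}_k\}$ of $\widetilde{V}$ (for instance, the eigenfunctions of the Stokes operator on $\widetilde{\mathcal{D}}$) and construct Galerkin approximations $\widetilde{\boldsymbol{u}}_n$ in $\widetilde{V}_n=\mathrm{span}(\boldsymbol{e}_1,\ldots,\boldsymbol{e}_n)$ by testing (\ref{traeq}) against the basis with respect to $\langle\cdot,\cdot\rangle_t$. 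This produces a finite-dimensional It\^o SDE with time-dependent coefficients that are locally Lipschitz on $\mathbb{R}^n$, so local strong existence is standard.

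Next I derive uniform-in-$n$ a priori estimates by applying the finite-dimensional It\^o formula to $|\widetilde{\boldsymbol{u}}_n(s)|_s^{2}$. The nonlinear contribution $\langle\mathcal{N}(\widetilde{\boldsymbol{u}}_n),\widetilde{\boldsymbol{u}}_n\rangle_s$ vanishes by the antisymmetry of $b_t$ together with (\ref{eq 1}); the operator $F$ supplies the coercive term $|\nabla_h\widetilde{\boldsymbol{u}}_n|_s^{2}$; and the lower-order contributions from $G$, together with the extra term produced by differentiating $\langle\cdot,\cdot\rangle_t$ in $s$, are absorbed using the smoothness of $L$ guaranteed by (A1) and Young's inequality. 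Combined with Burkholder's inequality and the moment assumptions on $\boldsymbol{f}$ and $\boldsymbol{\sigma}$, this yields uniform bounds on $\EE\sup_{s\in[0,T]}|\widetilde{\boldsymbol{u}}_n(s)|_s^{p}$ and $\EE\int_0^{T}|\nabla_h\widetilde{\boldsymbol{u}}_n(s)|_s^{2}\,ds$.

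The main obstacle is the tightness of $\{\mathrm{Law}(\widetilde{\boldsymbol{u}}_n)\}$. Because the underlying function space is fixed and the norms $|\cdot|_t,|\nabla_h\cdot|_t$ are uniformly (in $t$) equivalent to the standard norms on $\widetilde{H},\widetilde{V}$ (a consequence of (A1)), I would deploy an Aubin--Lions type compactness argument on the fixed triple $\widetilde{V}\hookrightarrow\widetilde{H}\hookrightarrow\widetilde{V}^{*}$: combining the $\mathbb{L}^{2}([0,T];\widetilde{V})$ bound above with a fractional time-regularity estimate of the form $\EE|\widetilde{\boldsymbol{u}}_n(t+h)-\widetilde{\boldsymbol{u}}_n(t)|^{q}_{\widetilde{V}^{*}}\leq Ch^{\beta}$ (with $\beta,q>0$), which one reads off term-by-term from (\ref{traeq}) using the previous bounds. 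This delivers tightness in $\mathbb{L}^{2}([0,T];\widetilde{H})\cap C([0,T];\widetilde{V}^{*})$. A Skorokhod representation then furnishes, on a new probability space, an a.s.\ subsequential limit $(\widetilde{\boldsymbol{u}},\widetilde{W})$, whose strong $\mathbb{L}^{2}([0,T];\widetilde{H})$ convergence allows one to identify the limit of the quadratic nonlinearity $\mathcal{N}(\widetilde{\boldsymbol{u}})$ in the usual way; pulling back via $L^{-1}$ produces a martingale (probabilistic weak) solution of (\ref{add}).

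For pathwise uniqueness, given two solutions $\boldsymbol{u}_1,\boldsymbol{u}_2$ on a common stochastic basis, I work with $\widetilde{\boldsymbol{w}}=\widetilde{\boldsymbol{u}}_1-\widetilde{\boldsymbol{u}}_2$ on the fixed space $\widetilde{H}$ and apply It\^o's formula to $|\widetilde{\boldsymbol{w}}(s)|_s^{2}\exp\bigl(-C\int_0^{s}\|\boldsymbol{u}_2(r)\|_{\mathbb{L}^{4}(\mathcal{D}(r))}^{4}\,dr\bigr)$, transferring the monotonicity estimate (\ref{monotonicity}) from $V_t$ to $\widetilde{V}$ via (\ref{eq 1}) and exploiting that the noise cancels because $\boldsymbol{\sigma}$ is independent of the solution. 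Gronwall then forces $\widetilde{\boldsymbol{w}}\equiv 0$. Combined with the existence of martingale solutions, the Yamada--Watanabe theorem delivers a unique probabilistic strong solution of (\ref{add}).
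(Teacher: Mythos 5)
Your overall architecture (transform to the cylinder via $L$, Galerkin approximation, energy estimate in which the $G$-term and the derivative of the time-dependent inner product cancel, Skorokhod representation, then pathwise uniqueness plus Yamada--Watanabe) coincides with the paper's, and your energy-estimate and uniqueness steps are sound in substance. The genuine gap is in the tightness step, which is precisely the crux of the problem. You propose to read a fractional time-regularity estimate $\mathbb{E}\,|\widetilde{\boldsymbol{u}}_n(t+h)-\widetilde{\boldsymbol{u}}_n(t)|_{\widetilde V^{*}}^{q}\leq Ch^{\beta}$ ``term-by-term from (\ref{traeq})'', but the Galerkin approximations do not satisfy (\ref{traeq}); they satisfy only its projection onto $\mathrm{span}(\boldsymbol{e}_1,\dots,\boldsymbol{e}_n)$, and this projection must be the $\langle\cdot,\cdot\rangle_s$-orthogonal one, since only the weighted pairing $\langle\cdot,\cdot\rangle_s$ annihilates the transformed pressure $\nabla_h\widetilde p$ against divergence-free test functions (with the plain $\widetilde H$-pairing one gets $\int h^{ij}\partial_j\widetilde p\,\widetilde v^i\,\mathrm{d}y=-\int\widetilde p\,\partial_j(h^{ij}\widetilde v^i)\,\mathrm{d}y$, which need not vanish). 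These projections are uniformly bounded on $\widetilde H$ by norm equivalence, but they are skew with respect to the fixed inner product and are not spectral projections of any fixed operator, so there is no reason they should be uniformly bounded (in $n$ and $s$) on $\widetilde V^{*}$ or on any fixed negative-order space. Since the drift terms $F\widetilde{\boldsymbol{u}}_n$ and $\mathcal{N}(\widetilde{\boldsymbol{u}}_n)$ live only in $\widetilde V^{*}$, the increment bound you need does not follow, and the Aubin--Lions/Flandoli--Gatarek compactness scheme on the fixed triple $\widetilde V\hookrightarrow\widetilde H\hookrightarrow\widetilde V^{*}$ does not close as stated; choosing the Stokes eigenfunctions of $\widetilde{\mathcal D}$ as basis does not help, because the Galerkin projection is still taken with respect to $\langle\cdot,\cdot\rangle_s$, not the inner product in which that basis is orthogonal.

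The paper circumvents exactly this difficulty: instead of estimating increments in $\widetilde V^{*}$, it controls, for each fixed $j$, the scalar coefficient $t\mapsto(\boldsymbol{u}_m(t),\boldsymbol{w}_j(t))_t$ directly from the projected equation (\ref{component}) --- no uniform projection bound is needed because only finitely many fixed smooth test functions enter --- obtaining equicontinuity from the energy bound and Garsia's lemma for the stochastic integral, and it combines this with the compactness criterion of Proposition 4.2 for subsets of $\mathbb{L}^2([0,T];H_t)$, which rests on Lemma \ref{lem 4.5} (Lemma 2.5 of \cite{TY}). To keep your route you would have to prove a uniform negative-norm bound for the time-dependent projections, or replace the $\widetilde V^{*}$-increment estimate by coefficientwise estimates --- at which point you have essentially reconstructed the paper's argument. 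Two minor further points: differentiating $|\widetilde{\boldsymbol{z}}(t)|_t^2$ in the uniqueness step requires the $G$-corrected chain rule of Lemma \ref{lem 5} (the naive derivative misses the moving-domain term), and since the noise is additive no exponential-weight It\^o argument is needed: the difference solves a deterministic equation pathwise, and Gronwall with $|b_t(\boldsymbol{z},\boldsymbol{u}_2,\boldsymbol{z})|\leq C\|\boldsymbol{z}\|_t\|\nabla\boldsymbol{z}\|_t\|\nabla\boldsymbol{u}_2\|_t$ already suffices.
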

\begin{section}{Proof of Theorem \ref{thm1}}
We will use  Galerkin approximations to prove Theorem \ref{thm1}.
Let $\{\widetilde{\boldsymbol{e}}_n\}_{n\in\mathbb{N}}$ be  a sequence of linearly independent elements  in the space $\mathscr{C}$  which is total in $\widetilde{V}$. For $s\in [0, T]$,  let $\{\widetilde{\boldsymbol{w}}_n(s)\}_{n\in\mathbb{N}}$ be its Schmidt orthogonalization with respect to the inner product (\ref{norm1}).
Note that  
\begin{eqnarray}\label{eq w}
\{\widetilde{\boldsymbol{w}}_n(s)\}_{n\in\mathbb{N},s\in[0,T]}
\text{ is smooth in }(y,s), \text{ and }\forall i,j\in \mathbb{N},
\langle\widetilde{\boldsymbol{w}}_i(s),\widetilde{\boldsymbol{w}}_j(s)\rangle_s=\delta_{i,j}, \ s\in[0,T].
\end{eqnarray}

The following result is taken from Lemma 2.7 in \cite{TY}.
\begin{lem}\label{lem 5}
If $\widetilde{\boldsymbol{w}}\in \mathbb{L}^2([0,T];\widetilde{V})$ and $\widetilde{\boldsymbol{w}}'\in \mathbb{L}^2([0,T];\widetilde{V}^*)$, then
$\widetilde{\boldsymbol{w}}\in C([0,T];\widetilde{H})$ and
\begin{eqnarray}\label{eq lem 5 1}
\frac{d|\widetilde{\boldsymbol{w}}(s)|_s^2}{ds}
=
2\langle\widetilde{\boldsymbol{w}}'(s)+G\widetilde{\boldsymbol{w}}(s),\widetilde{\boldsymbol{w}}(s)\rangle_s,\ \ s\in[0,T],
\end{eqnarray}
\end{lem}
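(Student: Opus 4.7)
The strategy is to reduce to the classical Lions--Temam lemma for a fixed Gelfand triple and then account for the correction coming from the $s$-dependence of the inner product $\langle\cdot,\cdot\rangle_s$.

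\emph{Step 1 (Continuity).} By the Remark following (\ref{Norm V}) together with (A1), the coefficients $h_{ij}(y,s)$, $h^{ij}(y,s)$, $J(s)$ are smooth and uniformly bounded above and below on $\overline{\widetilde{\mathcal{O}}}_T$, so the norms $|\cdot|_s$ and $\|\cdot\|_{\widetilde{H}}$ are uniformly equivalent on $[0,T]$, and likewise for the $\widetilde{V}$-norms. In particular $\widetilde{V}\hookrightarrow\widetilde{H}\hookrightarrow\widetilde{V}^{*}$ is a fixed Gelfand triple, and the classical Lions--Temam lemma applies directly: $\widetilde{\boldsymbol{w}}\in C([0,T];\widetilde{H})$ and $\frac{d}{ds}\langle\widetilde{\boldsymbol{w}}(s),\widetilde{\boldsymbol{w}}(s)\rangle_{\widetilde{H}}=2\langle\widetilde{\boldsymbol{w}}'(s),\widetilde{\boldsymbol{w}}(s)\rangle_{\widetilde{H}}$ in the sense of distributions on $(0,T)$. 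This disposes of the continuity assertion.

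\emph{Step 2 (Mollification).} Let $\widetilde{\boldsymbol{w}}_\epsilon=\rho_\epsilon*_s\widetilde{\boldsymbol{w}}$ (after a suitable extension in $s$). Then $\widetilde{\boldsymbol{w}}_\epsilon$ is smooth in $s$ with values in $\widetilde{V}$, $\widetilde{\boldsymbol{w}}_\epsilon\to\widetilde{\boldsymbol{w}}$ in $\mathbb{L}^2([0,T];\widetilde{V})$ and pointwise in $\widetilde{H}$ on $[0,T]$, and $\widetilde{\boldsymbol{w}}_\epsilon'=\rho_\epsilon*\widetilde{\boldsymbol{w}}'\to\widetilde{\boldsymbol{w}}'$ in $\mathbb{L}^2([0,T];\widetilde{V}^{*})$. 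Since $(y,s)\mapsto h_{ij}(y,s)J(s)$ is smooth, the classical product rule gives, pointwise in $s$,
\begin{equation*}
\frac{d}{ds}|\widetilde{\boldsymbol{w}}_\epsilon(s)|_s^{2}=2\langle\widetilde{\boldsymbol{w}}_\epsilon'(s),\widetilde{\boldsymbol{w}}_\epsilon(s)\rangle_s+\sum_{i,j=1}^{2}\int_{\widetilde{\mathcal{D}}}\partial_s\!\bigl(h_{ij}(y,s)J(s)\bigr)\widetilde{w}_\epsilon^{\,i}(y,s)\widetilde{w}_\epsilon^{\,j}(y,s)\,dy.
\end{equation*}

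\emph{Step 3 (Metric-derivative identity and passage to the limit).} The heart of the argument is the purely geometric identity
\begin{equation*}
\sum_{i,j=1}^{2}\int_{\widetilde{\mathcal{D}}}\partial_s\!\bigl(h_{ij}(y,s)J(s)\bigr)\widetilde{v}^{\,i}\widetilde{v}^{\,j}\,dy=2\langle G\widetilde{\boldsymbol{v}},\widetilde{\boldsymbol{v}}\rangle_s,\qquad\widetilde{\boldsymbol{v}}\in\widetilde{V}.
\end{equation*}
This can be established directly by differentiating $h_{ij}=\sum_k(\partial x^k/\partial y^i)(\partial x^k/\partial y^j)$ and $J=\det K$ in $s$ and integrating by parts in $y$ (using $\widetilde{\boldsymbol{v}}|_{\partial\widetilde{\mathcal{D}}}=0$) to reassemble the expression in the bilinear form $2\langle G\widetilde{\boldsymbol{v}},\widetilde{\boldsymbol{v}}\rangle_s$. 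A more conceptual route goes through the physical domain: the change of variables gives $|\widetilde{\boldsymbol{v}}(s)|_s^{2}=\|\boldsymbol{v}(t)\|_t^{2}$, and Reynolds' transport theorem on the moving domain $\mathcal{D}(t)$ (with vanishing boundary flux, since $\boldsymbol{v}|_{\Gamma(t)}=0$) yields $\frac{d}{dt}\|\boldsymbol{v}(t)\|_t^{2}=2(\boldsymbol{v},\partial_t\boldsymbol{v})_t$; expressing $\partial_t\boldsymbol{v}$ at fixed $x$ in terms of derivatives at fixed $y$ reproduces exactly the operator $G$ of (\ref{traeq}). I expect this identification to be the main technical step, since it is the only computation that leaves the abstract framework; it is routine but requires careful differential-geometric bookkeeping. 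Finally, combining Steps 2 and 3 for $\widetilde{\boldsymbol{w}}_\epsilon$, integrating from $r$ to $s$, and letting $\epsilon\to 0$ using the convergences above together with the uniform boundedness of the coefficients of $G$ yields the integrated form of (\ref{eq lem 5 1}), which is equivalent to the stated distributional derivative formula.
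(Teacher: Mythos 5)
You should first note that the paper does not actually prove this lemma: it is quoted verbatim from Lemma 2.7 of \cite{TY} (Miyakawa--Teramoto), so any argument you give is necessarily a different route from the paper's. Your route is the natural one and, in outline, it is essentially how such results are proved in the reference: uniform equivalence of the moving norms with the fixed $\widetilde{H}$, $\widetilde{V}$ norms (Step 1) settles $\widetilde{\boldsymbol{w}}\in C([0,T];\widetilde{H})$ via the classical Lions lemma; mollification in $s$ (Step 2) is legitimate, and the product rule you write for $|\widetilde{\boldsymbol{w}}_\epsilon(s)|_s^2$ is correct; the convergences you invoke in Step 3 suffice to pass to the limit in the integrated identity. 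So the architecture is sound.

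The caveat is that your Step 3 identity
\begin{equation*}
\sum_{i,j=1}^{2}\int_{\widetilde{\mathcal{D}}}\partial_s\bigl(h_{ij}(y,s)J(s)\bigr)\,\widetilde{v}^{\,i}\widetilde{v}^{\,j}\,\mathrm{d}y
=2\langle G\widetilde{\boldsymbol{v}},\widetilde{\boldsymbol{v}}\rangle_s,\qquad \widetilde{\boldsymbol{v}}\in\widetilde{V},
\end{equation*}
is asserted rather than proved, and it is the \emph{entire} content of the lemma beyond the classical fixed-triple statement: without it you have only recovered the Lions lemma for $\langle\cdot,\cdot\rangle_{\widetilde H}$, not (\ref{eq lem 5 1}). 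The identity is in fact correct, and your second ("conceptual") route is the cleanest way to close it: for a fixed $\widetilde{\boldsymbol{v}}\in\mathscr{C}$, the change of variables (\ref{eq 1}) gives $|\widetilde{\boldsymbol{v}}|_t^2=\|\boldsymbol{v}(t)\|_t^2$ with $\boldsymbol{v}(\cdot,t)$ compactly supported in $\mathcal{D}(t)$, so $\frac{d}{dt}\|\boldsymbol{v}(t)\|_t^2=2(\partial_t\boldsymbol{v},\boldsymbol{v})_t$ (no boundary flux), and the paper's own derivation of (\ref{traeq}) in Section 2 says precisely that the transform of $\partial_t\boldsymbol{v}$ is $\partial_s\widetilde{\boldsymbol{v}}+G\widetilde{\boldsymbol{v}}=G\widetilde{\boldsymbol{v}}$ for $s$-independent $\widetilde{\boldsymbol{v}}$; then extend to $\widetilde{V}$ by density, both sides being continuous in the $\widetilde V$-norm since $G$ is first order with coefficients bounded by (A1). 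You should carry this out explicitly (or at least reduce it explicitly to the Section 2 computation) rather than defer it. Two further small points deserve a sentence each in a complete write-up: the meaning of $\langle\widetilde{\boldsymbol{w}}'(s),\widetilde{\boldsymbol{w}}(s)\rangle_s$ when $\widetilde{\boldsymbol{w}}'(s)\in\widetilde{V}^*$ (which identification of $\widetilde H$ with its dual is used to define the derivative and the pairing), and the endpoint behaviour of the mollification, which is harmless once continuity from Step 1 is known but should be stated.
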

where $G$ was defined as in Section 2.
\vskip 0.4cm
Combining the above lemma  with (\ref{eq w}), we have
\begin{lem}\label{lem 6}
For any $i,j\in\mathbb{N}$,
\begin{eqnarray}\label{eq lem 6 1}
0&=&\frac{d\langle\widetilde{\boldsymbol{w}}_i(s),\widetilde{\boldsymbol{w}}_j(s)\rangle_s}{ds}\nonumber\\
&=&
\langle\widetilde{\boldsymbol{w}}'_i(s)+G\widetilde{\boldsymbol{w}}_i(s),\widetilde{\boldsymbol{w}}_j(s)\rangle_s
+
\langle\widetilde{\boldsymbol{w}}'_j(s)+G\widetilde{\boldsymbol{w}}_j(s),\widetilde{\boldsymbol{w}}_i(s)\rangle_s,
 \ s\in[0,T].
\end{eqnarray}
\end{lem}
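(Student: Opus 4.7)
The plan is to split the statement into two independent assertions. The first equality asserts that $d\langle\widetilde{\boldsymbol{w}}_i(s),\widetilde{\boldsymbol{w}}_j(s)\rangle_s/ds = 0$; this is immediate because the orthonormality recorded in (\ref{eq w}) says exactly that $\langle\widetilde{\boldsymbol{w}}_i(s),\widetilde{\boldsymbol{w}}_j(s)\rangle_s = \delta_{i,j}$ for every $s\in[0,T]$, so this scalar function is constant in $s$.

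For the second equality, the idea is to apply Lemma \ref{lem 5} together with a polarization argument. By the smoothness asserted in (\ref{eq w}), each $\widetilde{\boldsymbol{w}}_n$, and hence any finite linear combination such as $\widetilde{\boldsymbol{w}}_i + \widetilde{\boldsymbol{w}}_j$, lies in $\mathbb{L}^2([0,T];\widetilde{V})$ with time-derivative in $\mathbb{L}^2([0,T];\widetilde{V}^*)$, so Lemma \ref{lem 5} is applicable to each of them. The plan is to write identity (\ref{eq lem 5 1}) for $\widetilde{\boldsymbol{w}} = \widetilde{\boldsymbol{w}}_i + \widetilde{\boldsymbol{w}}_j$ and expand the right-hand side using the bilinearity of $\langle\cdot,\cdot\rangle_s$ together with the linearity of both $G$ and of the time derivative. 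This produces a sum of four inner products whose two ``diagonal'' terms match exactly those obtained by applying (\ref{eq lem 5 1}) to $\widetilde{\boldsymbol{w}}_i$ and to $\widetilde{\boldsymbol{w}}_j$ individually. Subtracting those two identities from the one for the sum, and using the elementary relation $|\widetilde{\boldsymbol{w}}_i+\widetilde{\boldsymbol{w}}_j|_s^2 - |\widetilde{\boldsymbol{w}}_i|_s^2 - |\widetilde{\boldsymbol{w}}_j|_s^2 = 2\langle\widetilde{\boldsymbol{w}}_i,\widetilde{\boldsymbol{w}}_j\rangle_s$, a division by two yields the asserted formula for $d\langle\widetilde{\boldsymbol{w}}_i,\widetilde{\boldsymbol{w}}_j\rangle_s/ds$.

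There is essentially no substantive obstacle. The only delicate point to keep in mind is the time-dependence of the inner product $\langle\cdot,\cdot\rangle_s$ (coming from $h_{ij}(y,s)$ and $J(s)$ in (\ref{norm1})), which is what forces the operator $G$ into the formula of Lemma \ref{lem 5} and hence also into the polarized identity (\ref{eq lem 6 1}). Once Lemma \ref{lem 5} is in hand, the remainder of the argument is purely algebraic because $\langle\cdot,\cdot\rangle_s$ is bilinear for each fixed $s$. The payoff of this little lemma, to be used in the Galerkin construction below, is that it expresses $\langle\widetilde{\boldsymbol{w}}'_i,\widetilde{\boldsymbol{w}}_j\rangle_s$ (which would otherwise look asymmetric in $i,j$) in a symmetric form modulo the drift coming from the moving inner product.
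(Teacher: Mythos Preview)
Your proposal is correct and follows essentially the same approach as the paper: both deduce the first equality directly from the orthonormality in (\ref{eq w}) and obtain the second by applying Lemma~\ref{lem 5} to linear combinations of $\widetilde{\boldsymbol{w}}_i,\widetilde{\boldsymbol{w}}_j$ and then polarizing. The only cosmetic difference is that the paper uses the identity $4\langle\widetilde{\boldsymbol{w}}_i,\widetilde{\boldsymbol{w}}_j\rangle_s = |\widetilde{\boldsymbol{w}}_i+\widetilde{\boldsymbol{w}}_j|_s^2 - |\widetilde{\boldsymbol{w}}_i-\widetilde{\boldsymbol{w}}_j|_s^2$ (applying Lemma~\ref{lem 5} to the sum and the difference), whereas you use $2\langle\widetilde{\boldsymbol{w}}_i,\widetilde{\boldsymbol{w}}_j\rangle_s = |\widetilde{\boldsymbol{w}}_i+\widetilde{\boldsymbol{w}}_j|_s^2 - |\widetilde{\boldsymbol{w}}_i|_s^2 - |\widetilde{\boldsymbol{w}}_j|_s^2$ (applying it to the sum and to each summand separately); the two are interchangeable.
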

\begin{proof}
(\ref{eq w}) obviously implies the first equality in (\ref{eq lem 6 1}). Now replace $\widetilde{\boldsymbol{w}}$ in Lemma \ref{lem 5} by $\widetilde{\boldsymbol{w}}_i+\widetilde{\boldsymbol{w}}_j$
and $\widetilde{\boldsymbol{w}}_i-\widetilde{\boldsymbol{w}}_j$, respectively, to get
\begin{eqnarray}\label{eq lem 5 2}
\frac{d|\widetilde{\boldsymbol{w}}_i(s)+\widetilde{\boldsymbol{w}}_j(s)|_s^2}{ds}
=
2\langle\widetilde{\boldsymbol{w}}_i'(s)+\widetilde{\boldsymbol{w}}_j'(s)
   +
    G(\widetilde{\boldsymbol{w}}_i(s)+\widetilde{\boldsymbol{w}}_j(s)),\widetilde{\boldsymbol{w}}_i(s)+\widetilde{\boldsymbol{w}}_j(s)\rangle_s,
\end{eqnarray}
and
\begin{eqnarray}\label{eq lem 5 3}
\frac{d|\widetilde{\boldsymbol{w}}_i(s)-\widetilde{\boldsymbol{w}}_j(s)|_s^2}{ds}
=
2\langle\widetilde{\boldsymbol{w}}_i'(s)-\widetilde{\boldsymbol{w}}_j'(s)
   +
    G(\widetilde{\boldsymbol{w}}_i(s)-\widetilde{\boldsymbol{w}}_j(s)),\widetilde{\boldsymbol{w}}_i(s)-\widetilde{\boldsymbol{w}}_j(s)\rangle_s.
\end{eqnarray}
Using the fact that
$$
4\langle\widetilde{\boldsymbol{w}}_i(s),\widetilde{\boldsymbol{w}}_j(s)\rangle_s
=
|\widetilde{\boldsymbol{w}}_i(s)+\widetilde{\boldsymbol{w}}_j(s)|_s^2
-
|\widetilde{\boldsymbol{w}}_i(s)-\widetilde{\boldsymbol{w}}_j(s)|_s^2
$$
we have
\begin{eqnarray*}
&&\frac{d4\langle\widetilde{\boldsymbol{w}}_i(s),\widetilde{\boldsymbol{w}}_j(s)\rangle_s}{ds}\nonumber\\
&=&
2\langle\widetilde{\boldsymbol{w}}_i'(s)+\widetilde{\boldsymbol{w}}_j'(s)
   +
    G\widetilde{\boldsymbol{w}}_i(s)+G\widetilde{\boldsymbol{w}}_j(s),\widetilde{\boldsymbol{w}}_i(s)+\widetilde{\boldsymbol{w}}_j(s)\rangle_s\nonumber\\
&&-
2\langle\widetilde{\boldsymbol{w}}_i'(s)-\widetilde{\boldsymbol{w}}_j'(s)
   +
    G\widetilde{\boldsymbol{w}}_i(s)-G\widetilde{\boldsymbol{w}}_j(s),\widetilde{\boldsymbol{w}}_i(s)-\widetilde{\boldsymbol{w}}_j(s)\rangle_s\nonumber\\
&=&
4\langle\widetilde{\boldsymbol{w}}'_i(s)+G\widetilde{\boldsymbol{w}}_i(s),\widetilde{\boldsymbol{w}}_j(s)\rangle_s
+
4\langle\widetilde{\boldsymbol{w}}'_j(s)+G\widetilde{\boldsymbol{w}}_j(s),\widetilde{\boldsymbol{w}}_i(s)\rangle_s.
\end{eqnarray*}

 The proof of Lemma \ref{lem 6} is complete.
\end{proof}


\vskip 0.4cm

For each $m\in\mathbb{N}$, we define an approximate process $\widetilde{\boldsymbol{u}}_m$ as follows:
\begin{align}\label{eq um}
\widetilde{\boldsymbol{u}}_m(s)=\sum^{m}_{j=1}  {g}_{jm}(s) \widetilde{\boldsymbol{w}}_j(s),\ \ \ s\in[0,T],
\end{align}
where ${g}_{jm}, 1\leq j\leq m$ are real-valued  stochastic processes on $[0,T]$ determined by the equations:
\begin{align}
\left<\mathrm{d}\widetilde{\boldsymbol{u}}_m(s),\widetilde{\boldsymbol{w}}_j(s)\right>_s
&=\left<F\widetilde{\boldsymbol{u}}_m(s),\widetilde{\boldsymbol{w}}_j(s)\right>_s\mathrm{d}s
-\left<G\widetilde{\boldsymbol{u}}_m(s),\widetilde{\boldsymbol{w}}_j(s)\right>_s\mathrm{d}s-\left<\mathcal{N}(\widetilde{\boldsymbol{u}}_m(s)),  \widetilde{\boldsymbol{w}}_j(s)\right>_s\mathrm{d}s\nonumber\\ &\ \ +\left<\widetilde{\boldsymbol{f}}(s),\widetilde{\boldsymbol{w}}_j(s)\right>_s\mathrm{d}s
+\left<\widetilde{\boldsymbol{\sigma}}(s),\widetilde{\boldsymbol{w}}_j(s)\right>_s\mathrm{d}W(s),
\ j=1,2,\cdots,m,\label{equ1}
\end{align}
with initial data
$$
\widetilde{\boldsymbol{u}}_m(0)=\sum^{m}_{j=1}  {g}_{jm}(0) \widetilde{\boldsymbol{w}}_j(0),
$$
where
$$
{g}_{jm}(0)=\left<\widetilde{\boldsymbol{u}}_0,\widetilde{\boldsymbol{w}}_{j}(0)\right>_0, \ j=1,2,\cdots,m.
$$

Since $F$ and $G$ are linear operators, for $j=1,2,\cdots, m,$
\begin{equation*}
\left<F\widetilde{\boldsymbol{u}}_m(s),\widetilde{\boldsymbol{w}}_j(s)\right>_s=\sum^{m}_{k=1}{g}_{km}(s)\left<F\widetilde{\boldsymbol{w}}_k(s),\widetilde{\boldsymbol{w}}_j(s)\right>_s,
\end{equation*}
\begin{equation*}
\left<G\widetilde{\boldsymbol{u}}_m(s),\widetilde{\boldsymbol{w}}_j(s)\right>_s=\sum^{m}_{k=1}{g}_{km}(s)\left<G\widetilde{\boldsymbol{w}}_k(s),\widetilde{\boldsymbol{w}}_j(s)\right>_s.
\end{equation*}
And for the bilinear operator $\mathcal{N}$, we have, for $j=1,2,\cdots, m,$
\begin{align*}
\left<\mathcal{N}(\widetilde{\boldsymbol{u}}_m(s)),  \widetilde{\boldsymbol{w}}_j(s)\right>_s&=\left<\mathcal{N}(\widetilde{\boldsymbol{u}}_m(s),\widetilde{\boldsymbol{u}}_m(s)),  \widetilde{\boldsymbol{w}}_j(s)\right>_s\\
&=\left<\sum_{i=1}^{2} \left(\sum^{m}_{k=1}  {g}_{km}(s) \widetilde{\boldsymbol{w}}_k(s)\right)^i\nabla_i \left(\sum^{m}_{l=1}  {g}_{lm}(s) \widetilde{\boldsymbol{w}}_l(s)\right), \widetilde{\boldsymbol{w}}_j(s)\right>_s\\
&=\sum^{m}_{k=1}\sum^{m}_{l=1}  {g}_{km}(s)   {g}_{lm}(s) \left<\sum_{i=1}^{2} \left( \widetilde{\boldsymbol{w}}_k(s)\right)^i\nabla_i \left( \widetilde{\boldsymbol{w}}_l(s)\right), \widetilde{\boldsymbol{w}}_j(s)\right>_s\\
&=\sum^{m}_{k=1}\sum^{m}_{l=1} {g}_{km}(s){g}_{lm}(s)\left< \mathcal{N} (\widetilde{\boldsymbol{w}}_k(s),\widetilde{\boldsymbol{w}}_l(s)),\widetilde{\boldsymbol{w}}_j(s)\right>_s.
\end{align*}
Thus, equivalently ${g}_{jm}, 1\leq j\leq m$ solve the following  system of  stochastic differential equations on $[0,T]$:
\begin{align}\label{gkm}
&\mathrm{d}{g}_{jm}(s)+\sum_{k=1}^{m}{g}_{km}(s) \left<\boldsymbol{\widetilde{w}_{k}}^{\prime}(s),\boldsymbol{\widetilde{w}}_{j}(s)\right>_s \mathrm{d} s-\sum^{m}_{k=1}{g}_{km}(s)\left<F\widetilde{\boldsymbol{w}}_k(s),\widetilde{\boldsymbol{w}}_j(s)\right>_s\mathrm{d}s \nonumber \\
& + \sum^{m}_{k=1}{g}_{km}(s)\left<G\widetilde{\boldsymbol{w}}_k(s),\widetilde{\boldsymbol{w}}_j(s)\right>_s\mathrm{d}s+ \sum^{m}_{k=1}\sum^{m}_{l=1} {g}_{km}(s){g}_{lm}(s)\left< \mathcal{N} (\widetilde{\boldsymbol{w}}_k(s),\widetilde{\boldsymbol{w}}_l(s)),\widetilde{\boldsymbol{w}}_j(s)\right>_s\mathrm{d}s  \nonumber  \\
=&\left<\widetilde{\boldsymbol{f}}(s),\widetilde{\boldsymbol{w}}_j(s)\right>_s\mathrm{d}s  +\left<\widetilde{\boldsymbol{\sigma}}(s),\widetilde{\boldsymbol{w}}_j(s)\right>_s\mathrm{d}W(s),\ \ j=1,2,\cdots,m,
\end{align}
with the initial condition
\begin{equation*}
{g}_{jm}(0)=\left<\widetilde{\boldsymbol{u}}_0,\widetilde{\boldsymbol{w}}_{j}(0)\right>_0,\ \ j=1,2,\cdots,m.
\end{equation*}
\begin{lem}\label{Lem 4.3}
There exists a unique  solution  ${g}_{jm}, 1\leq j\leq m$ to equation (\ref{gkm}).
\end{lem}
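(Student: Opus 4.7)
The plan is to recognize (\ref{gkm}) as a standard It\^{o} stochastic differential equation for the $\mathbb{R}^m$-valued process $\mathbf{g}_m(s) = (g_{1m}(s), \ldots, g_{mm}(s))$, and to apply classical finite-dimensional SDE theory. The drift has the form $\mathbf{b}(s, \mathbf{g}) = A(s)\mathbf{g} + \mathbf{q}(s, \mathbf{g}) + \boldsymbol{\phi}(s)$, where $A(s)$ is the $m \times m$ matrix whose $(j,k)$-entry is $\langle F\widetilde{\boldsymbol{w}}_k(s), \widetilde{\boldsymbol{w}}_j(s)\rangle_s - \langle \widetilde{\boldsymbol{w}}_k'(s), \widetilde{\boldsymbol{w}}_j(s)\rangle_s - \langle G\widetilde{\boldsymbol{w}}_k(s), \widetilde{\boldsymbol{w}}_j(s)\rangle_s$, $\mathbf{q}$ is the quadratic map with $j$-th entry $-\sum_{k,l} g_k g_l \langle \mathcal{N}(\widetilde{\boldsymbol{w}}_k(s), \widetilde{\boldsymbol{w}}_l(s)), \widetilde{\boldsymbol{w}}_j(s)\rangle_s$, and $\boldsymbol{\phi}(s)_j = \langle \widetilde{\boldsymbol{f}}(s), \widetilde{\boldsymbol{w}}_j(s)\rangle_s$. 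The diffusion coefficient $\boldsymbol{\Sigma}(s)_j = \langle \widetilde{\boldsymbol{\sigma}}(s), \widetilde{\boldsymbol{w}}_j(s)\rangle_s$ is deterministic, and $\mathbf{g}_m(0)$ is $\mathscr{F}_0$-measurable.

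Next I would verify the regularity of the coefficients. By assumption (A1) and (\ref{eq w}), $L$ is $C^\infty$, the basis $\{\widetilde{\boldsymbol{w}}_j(s)\}$ is smooth in $(y,s)$, and the metric tensors $h^{ij}, h_{ij}, J$ are smooth and bounded on $[0,T]\times\widetilde{\mathcal{D}}$ together with their derivatives. Hence the entries of $A(s)$ and the coefficients of the quadratic form $\mathbf{q}$ are continuous, indeed smooth, on $[0,T]$, while $\boldsymbol{\phi} \in \mathbb{L}^2([0,T];\mathbb{R}^m)$ and $\boldsymbol{\Sigma} \in \mathbb{L}^p([0,T];\mathbb{R}^m) \subset \mathbb{L}^2$ by the assumptions on $\widetilde{\boldsymbol{f}}, \widetilde{\boldsymbol{\sigma}}$. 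Since $\mathbf{g}\mapsto \mathbf{b}(s,\mathbf{g})$ is polynomial of degree two and the diffusion does not depend on $\mathbf{g}$, the drift is locally Lipschitz with local Lipschitz constant uniform in $s\in[0,T]$. Standard SDE theory therefore yields a unique local strong solution up to the explosion time $\tau = \lim_{R\to\infty}\tau_R$, where $\tau_R := \inf\{s\le T : |\mathbf{g}_m(s)|\ge R\}$.

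To upgrade to global existence, and hence conclude $\tau = T$ a.s., I would derive an a priori bound on $|\mathbf{g}_m(s)|^2 = \sum_j g_{jm}(s)^2$. The key observation is that by the orthonormality in (\ref{eq w}) this Euclidean norm equals $|\widetilde{\boldsymbol{u}}_m(s)|_s^2$, so applying the ordinary finite-dimensional It\^{o} formula to $\sum_j g_{jm}^2$ produces quantities that can be reinterpreted intrinsically in $\widetilde{H}$. Three cancellations drive the estimate: (i) the symmetric part of the matrix $\bigl(\langle \widetilde{\boldsymbol{w}}_k'(s) + G\widetilde{\boldsymbol{w}}_k(s), \widetilde{\boldsymbol{w}}_j(s)\rangle_s\bigr)_{j,k}$ vanishes by Lemma \ref{lem 6}, so the $\widetilde{\boldsymbol{w}}'$ and $G$ contributions to $\frac{d}{ds}|\mathbf{g}_m|^2$ cancel; (ii) the trilinear term $\sum_{j,k,l} g_j g_k g_l \langle \mathcal{N}(\widetilde{\boldsymbol{w}}_k,\widetilde{\boldsymbol{w}}_l), \widetilde{\boldsymbol{w}}_j\rangle_s$ equals $\langle \mathcal{N}(\widetilde{\boldsymbol{u}}_m), \widetilde{\boldsymbol{u}}_m\rangle_s$, which vanishes via the standard identity $b_t(\boldsymbol{u},\boldsymbol{v},\boldsymbol{v})=0$ pulled back through $L$ using Lemma \ref{lem 3}; (iii) the $F$-term yields $\langle F\widetilde{\boldsymbol{u}}_m, \widetilde{\boldsymbol{u}}_m\rangle_s = -|\nabla_h \widetilde{\boldsymbol{u}}_m|_s^2 \le 0$, a dissipative contribution. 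Combining these with Cauchy--Schwarz on the $\widetilde{\boldsymbol{f}}$-term, the Burkholder--Davis--Gundy inequality on the stochastic integral, and Gronwall's lemma gives $\EE\sup_{s\le T\wedge\tau_R}|\mathbf{g}_m(s)|^2 \le C$ with $C$ independent of $R$, forcing $\tau_R\to T$ a.s.

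The main obstacle is the a priori estimate: one must correctly handle the $s$-dependence of $\langle\cdot,\cdot\rangle_s$ when differentiating $|\widetilde{\boldsymbol{u}}_m(s)|_s^2$. Working with coefficients $g_{jm}$ sidesteps this issue, because on $\mathrm{span}\{\widetilde{\boldsymbol{w}}_1(s),\ldots,\widetilde{\boldsymbol{w}}_m(s)\}$ the norm $|\cdot|_s$ coincides with the fixed Euclidean norm of the coefficients, so only the ordinary It\^{o} formula is needed; the $s$-dependence is absorbed entirely into the cancellation (i) via Lemma \ref{lem 6}. Uniqueness is routine: two solutions with the same initial data satisfy, by subtraction and the local Lipschitz property, a Gronwall-type inequality which forces them to coincide on $[0,T\wedge\tau_R]$ for every $R$, hence globally.
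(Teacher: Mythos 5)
Your proposal is correct and follows essentially the same route as the paper: local Lipschitz continuity of the quadratic drift gives a unique local solution, and global existence is obtained from the a priori energy bound on $\sum_j g_{jm}^2=|\widetilde{\boldsymbol{u}}_m(s)|_s^2$, which the paper defers to Proposition \ref{prop1} and Lemma \ref{ener} and which rests on exactly the cancellations you list (Lemma \ref{lem 6} for the $\widetilde{\boldsymbol{w}}'$ and $G$ terms, the vanishing trilinear term, and the dissipative $F$-term). Your explicit localization at the stopping times $\tau_R$ is a slightly more careful write-up of what the paper leaves implicit, but it is not a different argument.
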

\begin{proof}
For $ k,l=1,2,\cdots,m$, set
\begin{equation*}
a_{jk}(s)=\left<\boldsymbol{\widetilde{w}}_{k}^{\prime}(s),\boldsymbol{\widetilde{w}}_{j}(s)\right>_s-\left<F\widetilde{\boldsymbol{w}}_k(s),\widetilde{\boldsymbol{w}}_j(s)\right>_s+\left<G\widetilde{\boldsymbol{w}}_k(s),\widetilde{\boldsymbol{w}}_j(s)\right>_s,
\end{equation*}
\begin{equation*}
a_{jkl}(s)=\left< \mathcal{N} (\widetilde{\boldsymbol{w}}_k(s),\widetilde{\boldsymbol{w}}_l(s)),\widetilde{\boldsymbol{w}}_j(s)\right>_s.
\end{equation*}
The equation (\ref{gkm}) can be expressed as
\begin{align}
&\mathrm{d}{g}_{jm}(s)+\sum_{k=1}^{m}a_{jk}(s) {g}_{km}(s)  \mathrm{d}s+ \sum^{m}_{k=1}\sum^{m}_{l=1} a_{jkl}(s){g}_{km}(s){g}_{lm}(s) \mathrm{d}s  \nonumber  \\
=&\left<\widetilde{\boldsymbol{f}}(s),\widetilde{\boldsymbol{w}}_j(s)\right>_s \mathrm{d}s  +\left<\widetilde{\boldsymbol{\sigma}}(s),\widetilde{\boldsymbol{w}}_j(s)\right>_s \mathrm{d}W(s),\ \ j=1,2,\cdots,m.\label{SDE}
\end{align}
For $\boldsymbol{y}=(y^1,y^2,\cdots,y^m)\in \mathbb{R}^m$, since the coefficients $a_{jk}(s) y^k$, $ a_{jkl}(s)y^ky^l $ are locally Lipschitz, equation (\ref{SDE}) admits a unique local solution. The global existence and uniqueness of the  solution ${g}_{jm}, 1\leq j\leq m$ follow from the \emph{a priori}  estimates proved later in Lemma \ref{ener} below for  $\widetilde{\boldsymbol{u}}_m$.

The proof of Lemma \ref{Lem 4.3} is complete.
\end{proof}
\vskip 0.3cm
To obtain the estimate for   $\widetilde{\boldsymbol{u}}_m$,  we need the  following  chain rule.

\begin{prop}\label{prop1}
We have the following chain rule for  $\widetilde{\boldsymbol{u}}_m$:
\begin{align}
\mathrm{d}|\widetilde{\boldsymbol{u}}_m(s)|_s^2=&2\left<F\widetilde{\boldsymbol{u}}_m(s), \widetilde{\boldsymbol{u}}_m(s)\right>_s\mathrm{d}s
-2\left<\mathcal{N}(\widetilde{\boldsymbol{u}}_m(s)),  \widetilde{\boldsymbol{u}}_m(s)\right>_s\mathrm{d}s\nonumber\\&+2\left<\widetilde{\boldsymbol{f}}(s),\widetilde{\boldsymbol{u}}_m(s)\right>_s\mathrm{d}s+2\left<\widetilde{\boldsymbol{\sigma}}(s),\widetilde{\boldsymbol{u}}_m(s)\right>_s\mathrm{d}W(s)+|\widetilde{\boldsymbol{\sigma}}_m(s)|_s^2 \mathrm{d}s.\label{cr}
\end{align}

\end{prop}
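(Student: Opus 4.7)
The plan is to reduce (\ref{cr}) to the classical It\^o formula applied to the finite-dimensional real semimartingale $(g_{1m},\ldots,g_{mm})$ that solves (\ref{gkm}), exploiting the fact that the basis $\{\widetilde{\boldsymbol{w}}_j(s)\}_{j=1}^m$ is orthonormal with respect to $\langle\cdot,\cdot\rangle_s$ at every time $s$ by (\ref{eq w}). From this orthonormality,
$$|\widetilde{\boldsymbol{u}}_m(s)|_s^2=\sum_{j=1}^m g_{jm}(s)^2,$$
so the usual real It\^o formula yields
$$\mathrm{d}\sum_j g_{jm}(s)^2=2\sum_j g_{jm}(s)\,\mathrm{d}g_{jm}(s)+\sum_j\mathrm{d}\langle g_{jm}\rangle_s.$$
Reading off the martingale part of (\ref{gkm}) gives $\mathrm{d}\langle g_{jm}\rangle_s=\langle\widetilde{\boldsymbol{\sigma}}(s),\widetilde{\boldsymbol{w}}_j(s)\rangle_s^2\,\mathrm{d}s$, and defining $\widetilde{\boldsymbol{\sigma}}_m(s):=\sum_{j=1}^m\langle\widetilde{\boldsymbol{\sigma}}(s),\widetilde{\boldsymbol{w}}_j(s)\rangle_s\,\widetilde{\boldsymbol{w}}_j(s)$ as the projection of $\widetilde{\boldsymbol{\sigma}}(s)$ onto $\mathrm{span}\{\widetilde{\boldsymbol{w}}_1(s),\ldots,\widetilde{\boldsymbol{w}}_m(s)\}$, Parseval's identity with respect to $\langle\cdot,\cdot\rangle_s$ gives $\sum_j\langle\widetilde{\boldsymbol{\sigma}},\widetilde{\boldsymbol{w}}_j\rangle_s^2=|\widetilde{\boldsymbol{\sigma}}_m(s)|_s^2$, which already accounts for the last term in (\ref{cr}).

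Next I would compute the drift $2\sum_j g_{jm}\,\mathrm{d}g_{jm}$. Expanding $\mathrm{d}\widetilde{\boldsymbol{u}}_m=\sum_k\mathrm{d}g_{km}\,\widetilde{\boldsymbol{w}}_k+\sum_k g_{km}\,\widetilde{\boldsymbol{w}}_k'\,\mathrm{d}s$ and pairing with $\widetilde{\boldsymbol{w}}_j$ in $\langle\cdot,\cdot\rangle_s$ yields
$$\langle\mathrm{d}\widetilde{\boldsymbol{u}}_m,\widetilde{\boldsymbol{w}}_j\rangle_s=\mathrm{d}g_{jm}+\sum_k g_{km}\langle\widetilde{\boldsymbol{w}}_k',\widetilde{\boldsymbol{w}}_j\rangle_s\,\mathrm{d}s.$$
Multiplying by $2g_{jm}$, summing over $j$, and invoking (\ref{equ1}) then expresses $2\sum_j g_{jm}\,\mathrm{d}g_{jm}$ as the right-hand side of (\ref{cr}) minus the spurious contribution $2\langle G\widetilde{\boldsymbol{u}}_m,\widetilde{\boldsymbol{u}}_m\rangle_s\,\mathrm{d}s$ brought in by (\ref{equ1}) and minus the correction $2\sum_{j,k}g_{jm}g_{km}\langle\widetilde{\boldsymbol{w}}_k',\widetilde{\boldsymbol{w}}_j\rangle_s\,\mathrm{d}s$ produced by the time-dependence of the basis.

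The main obstacle, and indeed the only non-routine step, is to show that this correction term cancels the spurious $G$-contribution exactly. This is precisely the content of Lemma \ref{lem 6}: summing the identity (\ref{eq lem 6 1}) against $g_{im}g_{jm}$ and using the symmetry $i\leftrightarrow j$ of the resulting double sum gives
$$2\sum_{i,j}g_{im}g_{jm}\langle\widetilde{\boldsymbol{w}}_i',\widetilde{\boldsymbol{w}}_j\rangle_s=-2\sum_{i,j}g_{im}g_{jm}\langle G\widetilde{\boldsymbol{w}}_i,\widetilde{\boldsymbol{w}}_j\rangle_s=-2\langle G\widetilde{\boldsymbol{u}}_m,\widetilde{\boldsymbol{u}}_m\rangle_s.$$
Substituting this into the previous expression eliminates the $G$ terms entirely, and adding the quadratic-variation contribution from the first paragraph produces (\ref{cr}). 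The delicate point throughout is that the orthonormal basis itself depends on $s$, so one cannot directly write $\mathrm{d}\langle\widetilde{\boldsymbol{u}}_m,\widetilde{\boldsymbol{u}}_m\rangle_s=2\langle\mathrm{d}\widetilde{\boldsymbol{u}}_m,\widetilde{\boldsymbol{u}}_m\rangle_s$ plus a bracket term; Lemma \ref{lem 6} is exactly the cancellation identity engineered so that the extra pieces coming from $\widetilde{\boldsymbol{w}}_j'$ remove the $G$-terms in (\ref{equ1}) without leaving a residue.
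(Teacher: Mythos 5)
Your proposal is correct and follows essentially the same route as the paper: apply the finite-dimensional It\^o formula to $\sum_{j=1}^m g_{jm}^2(s)=|\widetilde{\boldsymbol{u}}_m(s)|_s^2$, substitute the system (\ref{gkm}), collect terms by (bi)linearity of $F$, $G$, $\mathcal{N}$, and invoke Lemma \ref{lem 6} weighted by $g_{im}g_{jm}$ to cancel the basis-derivative terms against the $G$-terms. Your explicit identification of $|\widetilde{\boldsymbol{\sigma}}_m(s)|_s^2$ as the Parseval sum for the projection of $\widetilde{\boldsymbol{\sigma}}(s)$ onto $\mathrm{span}\{\widetilde{\boldsymbol{w}}_1(s),\ldots,\widetilde{\boldsymbol{w}}_m(s)\}$ is a helpful clarification of notation the paper leaves implicit, but the argument is the same.
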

\begin{proof} Recall (\ref{gkm}).
By the It\^o formula,
\begin{align}
\sum_{j=1}^{m}\mathrm{d}{g}_{jm}^2(s)
=&2\sum_{j=1}^{m}{g}_{jm}(s)\mathrm{d}{g}_{jm}(s)+\sum_{j=1}^{m}|\left<\widetilde{\boldsymbol{\sigma}}(s),\widetilde{\boldsymbol{w}}_j(s)\right>_s|^2 \mathrm{d}s\nonumber\\
=&-2\sum_{j=1}^{m}\sum_{k=1}^{m}{g}_{km}(s){g}_{jm}(s) \left<\boldsymbol{\widetilde{w}}_{k}^{\prime}(s),\boldsymbol{\widetilde{w}}_{j}(s)\right>_s \mathrm{d}s\nonumber\\
&+2\sum_{j=1}^{m}\sum^{m}_{k=1}{g}_{km}(s){g}_{jm}(s)\left<F\widetilde{\boldsymbol{w}}_k(s),\widetilde{\boldsymbol{w}}_j(s)\right>_s\mathrm{d}s
\nonumber\\
&-2\sum_{j=1}^{m}\sum^{m}_{k=1}{g}_{km}(s){g}_{jm}(s)\left<G\widetilde{\boldsymbol{w}}_k(s),\widetilde{\boldsymbol{w}}_j(s)\right>_s\mathrm{d}s\nonumber
\\&-2\sum_{j=1}^{m}\sum^{m}_{k=1}\sum^{m}_{l=1}{g}_{jm}(s) {g}_{km}(s){g}_{lm}(s)\left< \mathcal{N} (\widetilde{\boldsymbol{w}}_k(s),\widetilde{\boldsymbol{w}}_l(s)),\widetilde{\boldsymbol{w}}_j(s)\right>_s \mathrm{d}s \nonumber
\\&+2\sum_{j=1}^{m}\left<\widetilde{\boldsymbol{f}}(s),\widetilde{\boldsymbol{w}}_j(s)\right>_s{g}_{jm}(s)\mathrm{d}s+2\sum_{j=1}^{m}\left<\widetilde{\boldsymbol{\sigma}}(s),\widetilde{\boldsymbol{w}}_j(s)\right>_s{g}_{jm}(s)\mathrm{d}W(s)\nonumber\\
&+\sum_{j=1}^{m}|\left<\widetilde{\boldsymbol{\sigma}}(s),\widetilde{\boldsymbol{w}}_j(s)\right>_s|^2 \mathrm{d}s\nonumber\\
=&-2\sum_{j=1}^{m}\sum_{k=1}^{m}{g}_{km}(s){g}_{jm}(s) \left<\boldsymbol{\widetilde{w}}_{k}^{\prime}(s),\boldsymbol{\widetilde{w}}_{j}(s)\right>_s \mathrm{d}s\nonumber\\
&-2\sum_{j=1}^{m}\sum^{m}_{k=1}{g}_{km}(s){g}_{jm}(s)\left<G\widetilde{\boldsymbol{w}}_k(s),\widetilde{\boldsymbol{w}}_j(s)\right>_s\mathrm{d}s\nonumber\\
&+2\left<\sum^{m}_{k=1}{g}_{km}(s)F\widetilde{\boldsymbol{w}}_k(s),\sum_{j=1}^{m}{g}_{jm}(s) \widetilde{\boldsymbol{w}}_j(s)\right>_s\mathrm{d}s\nonumber\\
&-2\left<\sum^{m}_{k=1}\sum^{m}_{l=1} {g}_{km}(s){g}_{lm} \mathcal{N} (\widetilde{\boldsymbol{w}}_k(s),\widetilde{\boldsymbol{w}}_l(s)),\sum_{j=1}^{m} {g}_{jm}(s) \widetilde{\boldsymbol{w}}_j(s)\right>_s \mathrm{d}s  \nonumber\\&+2\left<\widetilde{\boldsymbol{f}}(s),\sum^{m}_{j=1}  {g}_{jm}(s) \widetilde{\boldsymbol{w}}_k(s)\right>_s\mathrm{d}s+2\left<\widetilde{\boldsymbol{\sigma}}(s),\sum^{m}_{j=1}  {g}_{jm}(s) \widetilde{\boldsymbol{w}}_k(s)\right>_s\mathrm{d}W(s)\nonumber\\
&+\sum_{j=1}^{m}|\left<\widetilde{\boldsymbol{\sigma}}(s),\widetilde{\boldsymbol{w}}_j(s)\right>_s|^2 \mathrm{d}s\nonumber\\
=&-2\sum_{j=1}^{m}\sum_{k=1}^{m}{g}_{km}(s){g}_{jm}(s) \left<\boldsymbol{\widetilde{w}}_{k}^{\prime}(s),\boldsymbol{\widetilde{w}}_{j}(s)\right>_s \mathrm{d}s\nonumber\\
&-2\sum_{j=1}^{m}\sum^{m}_{k=1}{g}_{km}(s){g}_{jm}(s)\left<G\widetilde{\boldsymbol{w}}_k(s),\widetilde{\boldsymbol{w}}_j(s)\right>_s\mathrm{d}s\nonumber\\
&+2\left<F\widetilde{\boldsymbol{u}}_m(s), \widetilde{\boldsymbol{u}}_m(s)\right>_s\mathrm{d}s
-2\left<\mathcal{N}(\widetilde{\boldsymbol{u}}_m(s)),  \widetilde{\boldsymbol{u}}_m(s)\right>_s\mathrm{d}s+2\left<\widetilde{\boldsymbol{f}}(s),\widetilde{\boldsymbol{u}}_m(s)\right>_s\mathrm{d}s\nonumber\\&+2\left<\widetilde{\boldsymbol{\sigma}}(s),\widetilde{\boldsymbol{u}}_m(s)\right>_s\mathrm{d}W(s)+|\widetilde{\boldsymbol{\sigma}}_m(s)|_s^2 \mathrm{d}s.\label{ito-formula}
\end{align}
In the last equality of (\ref{ito-formula}), we have used (\ref{eq um}), and the facts that $F$ is linear operator and $\mathcal{N}$ is a bilinear operator.

Observing that, (\ref{eq um}), (\ref{eq w}) and Lemma \ref{lem 6} imply that
\begin{eqnarray*}
|\widetilde{\boldsymbol{u}}_m(s)|_s^2
=
\sum_{j=1}^{m}{g}_{jm}^2(s),
\end{eqnarray*}
and
\begin{eqnarray*}
2\sum_{j=1}^{m}\sum_{k=1}^{m}{g}_{km}(s){g}_{jm}(s) \left<\boldsymbol{\widetilde{w}}_{k}^{\prime}(s),\boldsymbol{\widetilde{w}}_{j}(s)\right>_s \mathrm{d}s
+
2\sum_{j=1}^{m}\sum^{m}_{k=1}{g}_{km}(s){g}_{jm}(s)\left<G\widetilde{\boldsymbol{w}}_k(s),\widetilde{\boldsymbol{w}}_j(s)\right>_s\mathrm{d}s
=
0,
\end{eqnarray*}
it follows from (\ref{ito-formula}) that
\begin{align*}
\mathrm{d}|\widetilde{\boldsymbol{u}}_m(s)|_s^2=&2\left<F\widetilde{\boldsymbol{u}}_m(s), \widetilde{\boldsymbol{u}}_m(s)\right>_s\mathrm{d}s
-2\left<\mathcal{N}(\widetilde{\boldsymbol{u}}_m(s)),  \widetilde{\boldsymbol{u}}_m(s)\right>_s\mathrm{d}s\\&+2\left<\widetilde{\boldsymbol{f}}(s),\widetilde{\boldsymbol{u}}_m(s)\right>_s\mathrm{d}s+2\left<\widetilde{\boldsymbol{\sigma}}_m(s),\widetilde{\boldsymbol{u}}_m(s)\right>_s\mathrm{d}W(s)+|\widetilde{\boldsymbol{\sigma}}_m(s)|_s^2 \mathrm{d}s.
\end{align*}

The proof of Proposition \ref{prop1} is complete.
\end{proof}
\vskip 0.4cm
Recall
$$\boldsymbol{u}_m(x,t)=\widetilde{\boldsymbol{u}}_{m}(L(x,t))M(x,t)^{-1}.$$
Next result provides a uniform bound for the family $\{\boldsymbol{u}_m, m\geq 1\}$.
\begin{lem}
There exists a constant $M$, independent of $m$,  such that
\begin{equation}
 \mathbb{E} \sup_{0\leq t\leq T}\|\boldsymbol{u}_m(t)\|_{t}^2+\mathbb{E}\int_{0}^{T}\|\nabla \boldsymbol{  u}_m(t)\|_t^2\mathrm{d} t \leq M.\label{estimate1}
\end{equation} \label{ener}
\end{lem}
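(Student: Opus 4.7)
The plan is to run a standard $L^2$-energy estimate on the transformed Galerkin process $\widetilde{\boldsymbol{u}}_m$ using the chain rule (\ref{cr}), and then transfer the result to $\boldsymbol{u}_m$ via the isometries in (\ref{eq 1}), which give $\|\boldsymbol{u}_m(t)\|_t^2=|\widetilde{\boldsymbol{u}}_m(t)|_t^2$ and $\|\nabla\boldsymbol{u}_m(t)\|_t^2=|\nabla_h\widetilde{\boldsymbol{u}}_m(t)|_t^2$ for every $t\in[0,T]$.

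First, I would simplify the drift in (\ref{cr}) using two identities that reflect the geometric origin of $F$ and $\mathcal{N}$. Since $F\widetilde{\boldsymbol{u}}_m$ corresponds to $\Delta\boldsymbol{u}_m$ after transformation and $\boldsymbol{u}_m$ vanishes on $\Gamma(s)$, integration by parts on $\mathcal{D}(s)$ combined with (\ref{eq 1}) gives
$$\left<F\widetilde{\boldsymbol{u}}_m(s),\widetilde{\boldsymbol{u}}_m(s)\right>_s=\left(\Delta\boldsymbol{u}_m(s),\boldsymbol{u}_m(s)\right)_s=-\|\nabla\boldsymbol{u}_m(s)\|_s^2=-|\nabla_h\widetilde{\boldsymbol{u}}_m(s)|_s^2.$$
Similarly, $\mathcal{N}(\widetilde{\boldsymbol{u}}_m)$ corresponds to $(\boldsymbol{u}_m\cdot\nabla)\boldsymbol{u}_m$, and the standard antisymmetry $b_s(\boldsymbol{u}_m,\boldsymbol{u}_m,\boldsymbol{u}_m)=0$, valid because $\mathrm{div}\,\boldsymbol{u}_m=0$ (Lemma \ref{lem 3}) and $\boldsymbol{u}_m|_{\Gamma(s)}=0$, translates to $\left<\mathcal{N}(\widetilde{\boldsymbol{u}}_m(s)),\widetilde{\boldsymbol{u}}_m(s)\right>_s=0$. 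With these, (\ref{cr}) collapses to
$$d|\widetilde{\boldsymbol{u}}_m(s)|_s^2+2|\nabla_h\widetilde{\boldsymbol{u}}_m(s)|_s^2\,ds=2\left<\widetilde{\boldsymbol{f}}(s),\widetilde{\boldsymbol{u}}_m(s)\right>_s ds+2\left<\widetilde{\boldsymbol{\sigma}}(s),\widetilde{\boldsymbol{u}}_m(s)\right>_s dW(s)+|\widetilde{\boldsymbol{\sigma}}_m(s)|_s^2\,ds.$$

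Next, after integrating from $0$ to $t\wedge\tau_N$ along a localizing sequence $\tau_N\uparrow T$ (to legitimize expectations of the martingale term), I would control the forcing term by duality and Young's inequality,
$$2|\left<\widetilde{\boldsymbol{f}}(s),\widetilde{\boldsymbol{u}}_m(s)\right>_s|=2|(\boldsymbol{f}(s),\boldsymbol{u}_m(s))_s|\leq 2\|\boldsymbol{f}(s)\|_s^*\|\nabla\boldsymbol{u}_m(s)\|_s\leq |\nabla_h\widetilde{\boldsymbol{u}}_m(s)|_s^2+(\|\boldsymbol{f}(s)\|_s^*)^2,$$
leaving half the dissipation on the left; and bound $|\widetilde{\boldsymbol{\sigma}}_m(s)|_s\leq|\widetilde{\boldsymbol{\sigma}}(s)|_s=\|\boldsymbol{\sigma}(s)\|_s$, since $\widetilde{\boldsymbol{\sigma}}_m(s)$ is the $\left<\cdot,\cdot\right>_s$-orthogonal projection of $\widetilde{\boldsymbol{\sigma}}(s)$ onto the span of $\widetilde{\boldsymbol{w}}_1(s),\ldots,\widetilde{\boldsymbol{w}}_m(s)$. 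Taking $\sup_{t\leq T}$ and expectation, the Burkholder-Davis-Gundy inequality gives
$$\mathbb{E}\sup_{t\leq T}\left|\int_0^t\left<\widetilde{\boldsymbol{\sigma}}(r),\widetilde{\boldsymbol{u}}_m(r)\right>_r dW(r)\right|\leq C\,\mathbb{E}\bigg(\int_0^T|\widetilde{\boldsymbol{\sigma}}(s)|_s^2|\widetilde{\boldsymbol{u}}_m(s)|_s^2\,ds\bigg)^{1/2}\leq \tfrac14\mathbb{E}\sup_{s\leq T}|\widetilde{\boldsymbol{u}}_m(s)|_s^2+C\,\mathbb{E}\int_0^T\|\boldsymbol{\sigma}(s)\|_s^2\,ds.$$
Absorbing $\tfrac14\mathbb{E}\sup_s|\widetilde{\boldsymbol{u}}_m|_s^2$ on the left, using $|\widetilde{\boldsymbol{u}}_m(0)|_0\leq\|\boldsymbol{u}_0\|_0$, and sending $N\to\infty$ by monotone convergence yields an $m$-independent bound depending only on $\|\boldsymbol{u}_0\|_0$, $\int_0^T(\|\boldsymbol{f}(s)\|_s^*)^2\,ds$, and $\int_0^T\|\boldsymbol{\sigma}(s)\|_s^2\,ds$; (\ref{estimate1}) then follows via (\ref{eq 1}). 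As a by-product this estimate also provides the global-in-time bound needed to extend the local solution of (\ref{gkm}) from Lemma \ref{Lem 4.3} to the whole interval $[0,T]$.

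The main delicate point I anticipate is verifying the two drift identities $\left<F\widetilde{\boldsymbol{u}}_m,\widetilde{\boldsymbol{u}}_m\right>_s=-|\nabla_h\widetilde{\boldsymbol{u}}_m|_s^2$ and $\left<\mathcal{N}(\widetilde{\boldsymbol{u}}_m),\widetilde{\boldsymbol{u}}_m\right>_s=0$ in the transformed metric directly from the Section 2 definitions involving the covariant derivatives $\nabla_j$, the Christoffel-type symbols $\Phi_{jk}^{i}$ and the tensors $h^{ij},h_{ij}$; these are precisely the cancellations that make the moving-domain analogue of the classical energy inequality work. Once they are in hand, no Gronwall inequality is needed and the remainder is a routine Young/BDG argument uniform in $m$.
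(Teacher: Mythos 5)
Your proposal is correct and follows essentially the same route as the paper: transform the drift terms back to the moving domain via (\ref{eq 1}) so that $\left<F\widetilde{\boldsymbol{u}}_m,\widetilde{\boldsymbol{u}}_m\right>_s=-\|\nabla\boldsymbol{u}_m\|_s^2$ and the nonlinear term vanishes, then Young's inequality on the forcing and a Burkholder--Davis--Gundy plus absorption argument on the stochastic integral, with no Gronwall needed. Your added touches (the localizing stopping times and the explicit observation that $|\widetilde{\boldsymbol{\sigma}}_m(s)|_s\leq|\widetilde{\boldsymbol{\sigma}}(s)|_s$ by orthogonal projection) are harmless refinements of the same argument.
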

\begin{proof}
By a change of variable, we see that
\begin{equation*}
\left<F\widetilde{\boldsymbol{u}}_m(t), \widetilde{\boldsymbol{u}}_m(t) \right>_t \mathrm{d}t= \left(\Delta \boldsymbol{u}_m(t), \boldsymbol{u}_m(t)\right)_t  \mathrm{d} t=-\|\nabla \boldsymbol{ u}_m(t)\|_t^2 \mathrm{d} t,
\end{equation*}
\begin{equation*}
\left<\mathcal{N}(\widetilde{\boldsymbol{u}}_m(t)),  \widetilde{\boldsymbol{u}}_m(t)\right>_t\mathrm{d}t = \left(\left(\boldsymbol{u}_m(t)\cdot\nabla\right)\boldsymbol{u}_m(t),\boldsymbol{u}_m(t)\right)_t\mathrm{d}t=0,
\end{equation*}
and (\ref{eq 1}) implies
$$
|\widetilde{\boldsymbol{u}}_m(t)|_t^2
=
\|\boldsymbol{u}_m(t)\|_{t}^2
\text{ and }
|\widetilde{\boldsymbol{\sigma}}_m(t)|_t^2
=
\|\boldsymbol{\sigma}_m(t)\|_{t}^2,
$$
Equation (\ref{cr}) becomes
\begin{equation}\label{energy}
\mathrm{d}\|\boldsymbol{u}_m(t)\|_{t}^2+ 2\| \nabla \boldsymbol{ u}_m(t)\|_t^2 \mathrm{d} t=2\left(\boldsymbol{f}(t),\boldsymbol{u}_m(t)\right)_t \mathrm{d} t+2\left(\boldsymbol{\sigma}_m(t),\boldsymbol{u}_m(t)\right)_t \mathrm{d} W(t)+\|\boldsymbol{\sigma}_m(t)\|_t^2 \mathrm{d} t.
\end{equation}

Hence, we have
\begin{equation}
\mathrm{d} \|\boldsymbol{u}_m(t)\|_t^2+ 2\|\nabla  \boldsymbol{ u}_m(t)\|_t^2 \mathrm{d} t\leq  \|\boldsymbol{f}(t)\|_{_t}^{*^2} \mathrm{d} t + \|\nabla \boldsymbol{u}_m(t) \|_{t}^2 \mathrm{d} t+2\left(\boldsymbol{\sigma}_m(t),\boldsymbol{u}_m(t)\right)_t \mathrm{d} W(t)+\|\boldsymbol{\sigma}_m(t)\|_t^2 \mathrm{d} t. \label{estimate}
\end{equation}
By the Burkholder-Davis-Gundy inequality, there exists some constant $C_4$ such that
\begin{equation*}
\mathbb{E}\left\{ \sup_{0\leq t\leq T} \left|\int_{0}^{t}\left(\boldsymbol{\sigma}(s),\boldsymbol{u}_m(s)\right)_s \mathrm{d} W(s)\right| \right\}\leq C_4 \mathbb{E}\left\{  \left(\int_{0}^{T}\left(\boldsymbol{\sigma}(s),\boldsymbol{u}_m(s)\right)_s^2 \mathrm{d} s \right)^{1/2} \right\}.
\end{equation*}
It follows from  (\ref{estimate}) that
\begin{align*}
 &\mathbb{E} \sup_{0\leq t\leq T}\|\boldsymbol{u}_m(t)\|_{t}^2+2\mathbb{E}\int_{0}^{T}\|\nabla  \boldsymbol{ u}_m(t)\|_t^2\mathrm{d} t\nonumber\\
\leq &\mathbb{E}\|\boldsymbol{u}_m(0)\|_{0}^2+ \mathbb{E}\int_{0}^{T} \|\boldsymbol{f}(t)\|_{t}^{*^2}\mathrm{d}t
+\mathbb{E}\int_{0}^{T} \|\nabla  \boldsymbol{u_m}(t)\|_{t}^{2}\mathrm{d}t+\mathbb{E}\int_{0}^{T} \|\boldsymbol{\sigma_m}(t)\|_{t}^{2}\mathrm{d}t\\&+C_4 \mathbb{E}\left\{  \left(\int_{0}^{T}\left(\boldsymbol{\sigma}_m(s),\boldsymbol{u}_m(s)\right)_s^2 \mathrm{d} s \right)^{1/2} \right\},\\
\leq &\mathbb{E}\|\boldsymbol{u}_m(0)\|_{0}^2+ \mathbb{E}\int_{0}^{T} \|\boldsymbol{f}(t)\|_{t}^{*^2}\mathrm{d}t
+\mathbb{E}\int_{0}^{T} \|\nabla \boldsymbol{ u_m}(t)\|_{t}^{2}\mathrm{d}t+\mathbb{E}\int_{0}^{T} \|\boldsymbol{\sigma_m}(t)\|_{t}^{2}\mathrm{d}t\\&+C_4 \mathbb{E}\left\{ \sup_{0\leq t\leq T}\|\boldsymbol{u}_m(t)\|_t \left(\int_{0}^{T}\|\boldsymbol{\sigma}_m(s)\|_s^2 \mathrm{d} s \right)^{1/2} \right\}\\
\leq &\mathbb{E}\|\boldsymbol{u}_m(0)\|_{0}^2+ \mathbb{E}\int_{0}^{T} \|\boldsymbol{f}(t)\|_{t}^{*^2}\mathrm{d}t
+\mathbb{E}\int_{0}^{T} \|\nabla \boldsymbol{ u_m}(t)\|_{t}^{2}\mathrm{d}t+\mathbb{E}\int_{0}^{T} \|\boldsymbol{\sigma_m}(t)\|_{t}^{2}\mathrm{d}t\\&
+\frac{1}{2}\mathbb{E} \sup_{0\leq t\leq T}\|\boldsymbol{u}_m(t)\|_{t}^2+C_5 \mathbb{E}\int_{0}^{T}\|\boldsymbol{\sigma}_m(s)\|_s^2 \mathrm{d} s.
\end{align*}
Re-arranging the terms, we get
\begin{equation}
 \mathbb{E} \sup_{0\leq t\leq T}\|\boldsymbol{u}_m(t)\|_{t}^2
 +
 2\mathbb{E}\int_{0}^{T}\|\nabla  \boldsymbol{ u}_m(t)\|_t^2\mathrm{d} t
  \leq C_6.
\end{equation}

The proof of Lemma \ref{ener} is complete.
\end{proof}

\vskip 0.2cm
To obtain a martingale solution to the stochastic Navier-Stokes equation (\ref{add}),  we will prove that the family of laws  $\{\mathscr{L}(\boldsymbol{u}_m), m\geq 1\}$ is tight in $\mathbb{L}^2([0,T];H_t)$.
Let
\begin{eqnarray}\label{eq w00}
\boldsymbol{w}_j (x,t):=\widetilde{\boldsymbol{w}}_j(L(x,t))M(x,t)^{-1}, j\geq 1.
\end{eqnarray}
By (\ref{eq 1}) and (\ref{eq w}), $\{\boldsymbol{w}_j\}_{j\in\mathbb{N}}$
is an orthogonal basis with respect to the inner product (\ref{eq 3.11}). The following lemma will be used.
\begin{lem}\label{lem 4.5}(Lemma 2.5 in  \cite{TY}) For each $\varepsilon>0$, there exists a positive integer $N=N_{\varepsilon}$ independent of $t\in[0,T]$ such that for any $\boldsymbol{v}\in V_t$,  we have
\begin{equation}
\left\|\boldsymbol{v}\right\|_{t}^2\leq \sum\limits_{j=1}^{N}\left(\boldsymbol{v},\boldsymbol{w_j}(t)\right)_t^2+\varepsilon\|\nabla \boldsymbol{v}\|_{t}^2.
\end{equation}
\end{lem}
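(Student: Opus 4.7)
My plan is to transfer the inequality to the fixed domain $\widetilde{\mathcal{D}}$ via $L$, and then prove it by a compactness-contradiction argument based on the uniform equivalence of the time-dependent norms.

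First I would use the identity (\ref{eq 1}) and the transformation $\boldsymbol{v}\leftrightarrow\widetilde{\boldsymbol{v}}$ from Section 2 to recast the claim as the equivalent statement: for every $\varepsilon>0$ there exists $N$ independent of $t$ such that
$$|\widetilde{\boldsymbol{v}}|_t^2 \;\le\; \sum_{j=1}^N \langle\widetilde{\boldsymbol{v}},\widetilde{\boldsymbol{w}}_j(t)\rangle_t^2 + \varepsilon\,|\nabla_h\widetilde{\boldsymbol{v}}|_t^2, \qquad \widetilde{\boldsymbol{v}}\in\widetilde{V}.$$
Let $P_N(t)$ denote the $\langle\cdot,\cdot\rangle_t$-orthogonal projection onto $\widetilde{V}_N:=\mathrm{span}\{\widetilde{\boldsymbol{w}}_1(t),\dots,\widetilde{\boldsymbol{w}}_N(t)\}$. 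Since Gram-Schmidt preserves finite-dimensional subspaces, $\widetilde{V}_N=\mathrm{span}\{\widetilde{\boldsymbol{e}}_1,\dots,\widetilde{\boldsymbol{e}}_N\}$ is actually $t$-independent. Parseval applied to the $\langle\cdot,\cdot\rangle_t$-orthonormal basis $\{\widetilde{\boldsymbol{w}}_j(t)\}$ from (\ref{eq w}) yields $|\widetilde{\boldsymbol{v}}-P_N(t)\widetilde{\boldsymbol{v}}|_t^2=|\widetilde{\boldsymbol{v}}|_t^2-\sum_{j=1}^N\langle\widetilde{\boldsymbol{v}},\widetilde{\boldsymbol{w}}_j(t)\rangle_t^2$, so the target is the uniform operator bound $|(I-P_N(t))\widetilde{\boldsymbol{v}}|_t^2\le\varepsilon|\nabla_h\widetilde{\boldsymbol{v}}|_t^2$ for all $t\in[0,T]$.

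Next I would argue by contradiction. Suppose the conclusion fails: there exist $\varepsilon_0>0$ and sequences $N_k\uparrow\infty$, $t_k\in[0,T]$, $\widetilde{\boldsymbol{v}}_k\in\widetilde{V}$ with $|\nabla_h\widetilde{\boldsymbol{v}}_k|_{t_k}\le 1$ such that $|\widetilde{\boldsymbol{v}}_k-P_{N_k}(t_k)\widetilde{\boldsymbol{v}}_k|_{t_k}^2>\varepsilon_0$. By (A1) and the equivalence noted in the Remark after (\ref{Norm V}), $\{\widetilde{\boldsymbol{v}}_k\}$ is bounded in $\widetilde{V}$. Using compactness of $\widetilde{V}\hookrightarrow\widetilde{H}$ (Rellich--Kondrachov) and of $[0,T]$, I pass to a subsequence so that $t_k\to t_\infty$ and $\widetilde{\boldsymbol{v}}_k\to\widetilde{\boldsymbol{v}}_\infty$ strongly in $\widetilde{H}$. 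For any fixed $M$ and $k$ so large that $N_k\ge M$, the best-approximation property of $P_{N_k}(t_k)$ on $\widetilde{V}_{N_k}\supseteq\widetilde{V}_M$ gives
$$\varepsilon_0 \;<\; |\widetilde{\boldsymbol{v}}_k-P_{N_k}(t_k)\widetilde{\boldsymbol{v}}_k|_{t_k}^2 \;\le\; |\widetilde{\boldsymbol{v}}_k-P_M(t_k)\widetilde{\boldsymbol{v}}_k|_{t_k}^2.$$
Because $H_2(\cdot,t)$, $J(t)$ and each $\widetilde{\boldsymbol{w}}_j(\cdot)$ are smooth in $t$, the finite-dimensional projection $P_M(t_k)\widetilde{\boldsymbol{v}}_k\to P_M(t_\infty)\widetilde{\boldsymbol{v}}_\infty$ in $\widetilde{H}$ and $|\cdot|_{t_k}\to|\cdot|_{t_\infty}$ on strongly convergent sequences. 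Passing $k\to\infty$ gives $|\widetilde{\boldsymbol{v}}_\infty-P_M(t_\infty)\widetilde{\boldsymbol{v}}_\infty|_{t_\infty}^2\ge\varepsilon_0$. Finally, since $\{\widetilde{\boldsymbol{e}}_n\}$ is total in $\widetilde{V}$ and $\widetilde{V}$ is dense in $\widetilde{H}$, the subspaces $\widetilde{V}_M$ are dense in $\widetilde{H}$, so letting $M\to\infty$ forces $P_M(t_\infty)\widetilde{\boldsymbol{v}}_\infty\to\widetilde{\boldsymbol{v}}_\infty$ in $|\cdot|_{t_\infty}$, contradicting $\varepsilon_0>0$.

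The main obstacle I expect is the joint passage to the limit in the finite-dimensional projection $P_M(t_k)\widetilde{\boldsymbol{v}}_k$ under the time-varying inner product: this is precisely where the smooth $t$-dependence of the Gram-Schmidt basis guaranteed by (\ref{eq w}), together with the smoothness of $H_2$ and $J$ from (A1), is indispensable for turning pointwise-in-$t$ Parseval into the $t$-uniform estimate. The rest of the proof reduces to standard Hilbert-space and compactness manipulations once the uniform norm equivalence from (A1) is in hand.
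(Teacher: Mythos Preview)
Your argument is correct. The paper does not supply its own proof of this lemma; it is quoted verbatim from \cite{TY} (Miyakawa--Teramoto, Lemma~2.5), so there is nothing in the present paper to compare against. Your compactness--contradiction route is a clean self-contained substitute: the key observations---that the span $\widetilde{V}_N=\mathrm{span}\{\widetilde{\boldsymbol{e}}_1,\dots,\widetilde{\boldsymbol{e}}_N\}$ is $t$-independent while only the orthonormalization varies, and that the projections $P_M(t)$ depend continuously on $t$ through the smooth coefficients $H_2(\cdot,t)$, $J(t)$ of (A1)---are exactly what make the pointwise Parseval decomposition uniform in $t$. The passage to the limit in $P_M(t_k)\widetilde{\boldsymbol{v}}_k$ is justified because $P_M(t)$ is the $\langle\cdot,\cdot\rangle_t$-orthogonal projection onto a \emph{fixed} finite-dimensional subspace, hence given by a Gram matrix whose entries vary continuously in $t$; combined with $\widetilde{\boldsymbol{v}}_k\to\widetilde{\boldsymbol{v}}_\infty$ in $\widetilde{H}$ this yields the claimed convergence. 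No gaps.
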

\vskip 0.2cm

Next we are going to introduce a family of precompact subsets of $\mathbb{L}^2([0,T];H_t)$. For $i\geq 1$, $J_i$ denotes a family of  equicontinuous functions on $[0,T]$. Recall that $J_i$ is said to be  \emph{equicontinuous} on $[0,T]$ if for $\forall \varepsilon>0$, $ \exists  \delta>0$ such that $\forall  s, t\in [0,T]$ with $|s-t|<\delta$, $|h(t)-h(s)|< \varepsilon$,  $\forall h\in J_i$.
For $N>0$, set $K_{N,J}=\bigcap_{i=1}^{\infty}K_{N,J_i}$, where
\begin{eqnarray}
K_{N,J_i}=&\Big\{\boldsymbol{g}\in\mathbb{L}^{\infty}\left([0,T];H_t\right)\cap\mathbb{L}^{2}\left([0,T];V_t\right):\sup\limits_{0\leq t\leq T}\left\|\boldsymbol{g}(t)\right\|_{t}\leq N, \int_{0}^{T} \left\|\nabla\boldsymbol{g}(t)\right\|_{t}^2\mathrm{d}t\leq N,\nonumber\\ &\boldsymbol{g}_i=\left\{\boldsymbol{g}_i(t)=(\boldsymbol{g}\left(t\right),\boldsymbol{w}_i\left(t\right))_{t}, t\in[0,T]\right\}\in J_i \Big\}.
\end{eqnarray}

\begin{prop}
$K_{N,J}$ is precompact in $\mathbb{L}^2([0,T];H_t)$.
\end{prop}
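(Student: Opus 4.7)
The plan is to combine an Arzela--Ascoli argument on each projection onto the moving basis $\{\boldsymbol{w}_j(t)\}$ with the uniform finite-rank approximation supplied by Lemma \ref{lem 4.5}, so as to extract from every sequence in $K_{N,J}$ a subsequence that is Cauchy in $\mathbb{L}^2([0,T];H_t)$.

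First I would show that each projection family is precompact in $C([0,T])$. For any $\boldsymbol{g}\in K_{N,J}$ and $i\ge 1$, the Cauchy--Schwarz inequality in $H_t$ together with the identity $\|\boldsymbol{w}_i(t)\|_t=\langle\widetilde{\boldsymbol{w}}_i(t),\widetilde{\boldsymbol{w}}_i(t)\rangle_t^{1/2}=1$ (from (\ref{eq w}) and (\ref{eq 1})) gives $|\boldsymbol{g}_i(t)|\le\|\boldsymbol{g}(t)\|_t\le N$ uniformly in $t\in[0,T]$. Since $\boldsymbol{g}_i\in J_i$ is equicontinuous on $[0,T]$, the Arzela--Ascoli theorem implies that $\{\boldsymbol{g}_i:\boldsymbol{g}\in K_{N,J}\}$ is precompact in $C([0,T])$. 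Given any sequence $\{\boldsymbol{g}^{(n)}\}\subset K_{N,J}$, a standard diagonal extraction then produces a subsequence, still denoted $\{\boldsymbol{g}^{(n)}\}$, along which $\boldsymbol{g}^{(n)}_i$ converges uniformly on $[0,T]$ for every $i\ge 1$; in particular, for each fixed $i$,
\begin{equation*}
\int_0^T |\boldsymbol{g}^{(n)}_i(t)-\boldsymbol{g}^{(m)}_i(t)|^2\,\mathrm{d}t\longrightarrow 0 \quad\text{as } n,m\to\infty.
\end{equation*}

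To conclude, fix $\varepsilon>0$ and apply Lemma \ref{lem 4.5} with this $\varepsilon$ to $\boldsymbol{v}=(\boldsymbol{g}^{(n)}-\boldsymbol{g}^{(m)})(t)\in V_t$; integrating in $t$ yields
\begin{align*}
\int_0^T\|\boldsymbol{g}^{(n)}(t)-\boldsymbol{g}^{(m)}(t)\|_t^2\,\mathrm{d}t
&\le \sum_{j=1}^{N_\varepsilon}\int_0^T|\boldsymbol{g}^{(n)}_j(t)-\boldsymbol{g}^{(m)}_j(t)|^2\,\mathrm{d}t \\
&\quad+\varepsilon\int_0^T\|\nabla(\boldsymbol{g}^{(n)}-\boldsymbol{g}^{(m)})(t)\|_t^2\,\mathrm{d}t.
\end{align*}
The last term is bounded by $4N\varepsilon$ by the $\mathbb{L}^2([0,T];V_t)$ bound in the definition of $K_{N,J}$, while the finite sum tends to $0$ as $n,m\to\infty$ by the previous step. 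Letting first $n,m\to\infty$ and then $\varepsilon\to 0$ shows that $\{\boldsymbol{g}^{(n)}\}$ is Cauchy, hence convergent, in $\mathbb{L}^2([0,T];H_t)$.

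The main obstacle is that the state space $H_t$ itself depends on $t$, which obstructs any direct application of classical Aubin--Lions type compactness. Lemma \ref{lem 4.5} is exactly what replaces such an argument: it furnishes a finite-rank approximation with an error term $\varepsilon\|\nabla\cdot\|_t^2$ whose constant $N_\varepsilon$ is uniform in $t$, thereby reducing the compactness problem on moving spaces to the compactness in $C([0,T])$ of countably many scalar-valued projections, which is handled by Arzela--Ascoli.
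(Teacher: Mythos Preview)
Your proof is correct and follows essentially the same route as the paper: Arzel\`a--Ascoli on the scalar projections $(\boldsymbol{g}(t),\boldsymbol{w}_j(t))_t$ combined with a diagonal extraction, then Lemma~\ref{lem 4.5} to pass from convergence of finitely many projections to convergence in $\mathbb{L}^2([0,T];H_t)$. The only cosmetic difference is that you obtain the uniform bound $|\boldsymbol{g}_i(t)|\le N$ directly from Cauchy--Schwarz and the orthonormality $\|\boldsymbol{w}_i(t)\|_t=1$, whereas the paper uses a pointwise bound on $\boldsymbol{w}_j$ together with the volume of $\mathcal{D}(t)$; your version is in fact slightly cleaner.
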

\begin{proof}
For each sequence  $\left\{\boldsymbol{z}_m\right\}_{m\geq1}\in K_{N,J}$, set $\boldsymbol{\rho}_{m,j}(t)=\left(\boldsymbol{z}_m(t),\boldsymbol{w}_j(t)\right)_t$ for all $j\geq 1$. By the definition of $K_{N,J}$, $\{\boldsymbol{\rho}_{m,j}(t),t\in[0,T]\}_{m\geq 1}$ is equicontinuous. Noting that (\ref{eq w}) and (\ref{eq w00}) imply that $\boldsymbol{w}_j$ is smooth in $(x,t)$, there exists a constant $M_j$ such that
\begin{equation}\label{eq Mj}
\left|\boldsymbol{w}_j(x,t)\right|\leq M_j,\left|\nabla \boldsymbol{w}_j(x,t)\right|\leq M_j,\left|\frac{\partial}{\partial t} \boldsymbol{w}_j(x,t)\right|\leq M_j,\quad \quad \forall x\in \mathcal{D}(t),t\in [0,T],
\end{equation}
which implies
\begin{align*}
\left|\boldsymbol{\rho}_{m,j}(t)\right|\leq  \left\|\boldsymbol{z}_m(t)\right\|_{t}M_j|\mathcal{D}(t)|^{1/2}
\leq   M_{j,N},
\end{align*}
where $|\mathcal{D}(t)|$ denotes the volume of $\mathcal{D}(t)$, i.e. $|\mathcal{D}(t)|=\int_{\mathcal{D}(t)}1dx$.
The last inequality is obtained from the facts that  $\sup\limits_{m\geq 1}\sup\limits_{0\leq t\leq T}\left\|\boldsymbol{z}_m(t)\right\|_{t}\leq N$ and the condition (A1).

Applying the Arzel\`a-Ascoli theorem and a diagonalization argument, we can choose a subsequence  $\{m_k, k\geq 1\}$ such that   $\left\{\{\boldsymbol{\rho}_{m_k,j}(t), t\in[0,T]\}, k\geq 1\right\}$ is a Cauchy sequence in $C([0,T];\mathbb{R})$ for each fixed $j$.\\
  \indent From Lemma \ref{lem 4.5}, for any $\varepsilon>0$, we have
\begin{equation}
\int_{0}^{T}\|\boldsymbol{z}_{m_k}(t)-\boldsymbol{z}_{m_l}(t)\|_{t}^2 \mathrm{d}t \leq \sum_{j=1}^{N_{\varepsilon}} \int_{0}^{T} \left|\rho_{m_k,j}(t)-\rho_{m_l,j}(t)\right|^2 \mathrm{d}t+2\varepsilon\sup\limits_{m\geq 1} \int_{0}^{T}\| \nabla\boldsymbol{z}_m(t)\|_{t}^2\mathrm{d}t.
\end{equation}
Let $k,l\to \infty$ to get
\begin{equation*}
\limsup\limits_{k,l\to \infty}\int_{0}^{T} \|\boldsymbol{z}_{m_k}(t)-\boldsymbol{z}_{m_l}(t)\|_{t}^2 \mathrm{d}t\leq 2\varepsilon N.
\end{equation*}
Since  $\varepsilon$ is arbitrary, we see that $\{\boldsymbol{z}_{m_k}\}_{k\geq 1}$ is a Cauchy sequence in $\mathbb{L}^2([0,T];H_t)$, completing the proof.
\end{proof}
\vskip 0.5cm
Recall that $\{\boldsymbol{u}_m\}_{m=1}^{\infty}$ satisfy the equation: for $1\leq j\leq m$ and $t\in[0,T]$,
\begin{align}&(\boldsymbol{u}_m\left(t\right),\boldsymbol{w}_j\left(t\right))_{t}-(\boldsymbol{u}_m\left(0\right),\boldsymbol{w}_j\left(0\right))_{0}
+\int_{0}^{t} (\nabla \boldsymbol{u}_m\left(s\right),\nabla \boldsymbol{w}_j\left(s\right))_{s} \mathrm{d}s-\int_{0}^{t} \ _{V^*_s}\left<\boldsymbol{f}(s),\boldsymbol{w}_j(s)\right>_{V_s}  \mathrm{d} s \nonumber\\
&+\int_{0}^{t} b_s\left(\boldsymbol{u}_m\left(s\right),\boldsymbol{u}_m\left(s\right),\boldsymbol{w}_j\left(s\right)\right) \mathrm{d}s
-\int_{0}^{t}(\boldsymbol{u}_m(s),\boldsymbol{w}_j^{\prime}(s))_{s} \mathrm{d}s=\int_{0}^{t} (\boldsymbol{\sigma}(s),\boldsymbol{w}_j(s))_{s}\mathrm{d}W(s).\label{component}
\end{align}

\begin{prop}
\{$\mathscr{L}(\boldsymbol{u}_m)$,$m\geq 1$\} is tight in $\mathbb{L}^2([0,T];H_t)$.
\end{prop}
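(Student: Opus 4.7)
The strategy is to apply the precompactness criterion just established: for each $\varepsilon>0$ I will exhibit an $N>0$ and a family $J=(J_i)_{i\geq 1}$ of equicontinuous subsets of $C([0,T];\RR)$ such that $\PP(\boldsymbol{u}_m\in K_{N,J})\geq 1-\varepsilon$ uniformly in $m$. Tightness of $\{\mathscr{L}(\boldsymbol{u}_m)\}$ in $\mathbb{L}^2([0,T];H_t)$ then follows immediately from the precompactness of $K_{N,J}$.

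The two norm conditions in the definition of $K_{N,J_i}$ are handled by Chebyshev's inequality applied to Lemma \ref{ener}: choosing $N$ sufficiently large guarantees $\sup_{t\in[0,T]}\|\boldsymbol{u}_m(t)\|_t\leq N$ and $\int_0^T\|\nabla\boldsymbol{u}_m(t)\|_t^2\,dt\leq N$ with probability at least $1-\varepsilon/2$, uniformly in $m$.

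The main step is to verify equicontinuity of the coordinate processes $\boldsymbol{\rho}_{m,j}(t):=(\boldsymbol{u}_m(t),\boldsymbol{w}_j(t))_t$ uniformly in $m$. Fix $j$ and use identity (\ref{component}) to represent the increment $\boldsymbol{\rho}_{m,j}(t)-\boldsymbol{\rho}_{m,j}(s)$ as four absolutely continuous integrals (viscosity, external force, nonlinear, and time-derivative terms) plus an It\^o stochastic integral. Each absolutely continuous term is controlled by a power of $|t-s|$ via the Cauchy-Schwarz inequality together with the uniform estimate (\ref{estimate1}), the smoothness bound (\ref{eq Mj}) on $\boldsymbol{w}_j$, the integrability $\boldsymbol{f}\in\mathbb{L}^2([0,T];V_t^*)$, and the skew-symmetry identity $b_r(\boldsymbol{u}_m,\boldsymbol{u}_m,\boldsymbol{w}_j)=-b_r(\boldsymbol{u}_m,\boldsymbol{w}_j,\boldsymbol{u}_m)$, which combined with (\ref{eq Mj}) yields $|b_r(\boldsymbol{u}_m,\boldsymbol{u}_m,\boldsymbol{w}_j)|\leq C_j\|\boldsymbol{u}_m\|_r^2$. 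For the stochastic integral, the Burkholder-Davis-Gundy inequality combined with H\"older's inequality and $\boldsymbol{\sigma}\in\mathbb{L}^p([0,T];H_t)$, $p>2$, gives
\begin{equation*}
\EE\bigl|\textstyle\int_s^t(\boldsymbol{\sigma},\boldsymbol{w}_j)_r\,dW(r)\bigr|^q\leq C_{j,q}|t-s|^{q(p-2)/(2p)}
\end{equation*}
for every $q\geq 2$; choosing $q$ large enough that $q(p-2)/(2p)>1$ and invoking the Kolmogorov-Centsov criterion delivers uniform-in-$m$ H\"older continuity.

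Combining these moment bounds via Markov's inequality, one selects, for each $j$, a compact equicontinuous $J_j\subset C([0,T];\RR)$ with $\PP(\boldsymbol{\rho}_{m,j}\in J_j)\geq 1-\varepsilon\, 2^{-j-1}$ uniformly in $m$; intersecting over $j$ with the norm-control event yields the required $K_{N,J}$. The principal obstacle is the nonlinear term $b_r(\boldsymbol{u}_m,\boldsymbol{u}_m,\boldsymbol{w}_j)$ whose quadratic dependence on $\boldsymbol{u}_m$ restricts the time regularity available---this is why the $L^\infty$-$L^2$ bound via integration by parts is crucial---while the hypothesis $p>2$ on $\boldsymbol{\sigma}$ is used precisely to upgrade the BDG moment estimate, through the choice of a sufficiently large $q$, into genuine equicontinuity through the Kolmogorov-Centsov criterion.
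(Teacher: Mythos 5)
Your proposal is correct and follows essentially the same route as the paper: Chebyshev on the energy estimate (\ref{estimate1}) for the two norm conditions, decomposition of the increments of $(\boldsymbol{u}_m(t),\boldsymbol{w}_j(t))_t$ via (\ref{component}) with the deterministic terms controlled through (\ref{eq Mj}) and Cauchy--Schwarz, and BDG plus H\"older moment bounds on the stochastic integral upgraded to a pathwise H\"older modulus with a random constant having finite moments, followed by Markov and a union bound over $j$ to land in $K_{N,J}$ with probability at least $1-\varepsilon$. The only cosmetic differences are that the paper invokes the Garsia lemma where you use a quantitative Kolmogorov--Centsov criterion, and that it bounds the nonlinear term by $C_j\|\boldsymbol{u}_m\|_l\|\nabla\boldsymbol{u}_m\|_l$ rather than your skew-symmetry bound $C_j\|\boldsymbol{u}_m\|_l^2$; both choices yield the required equicontinuity.
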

\begin{proof}
Let $K_{N,J}$ be defined as in Proposition 4.2. We just need to prove: for $\forall \varepsilon>0$, there exist $J_i, i\in\mathbb{N}$ and $N$ such that  $\mathbb{P} (\boldsymbol{u}_m\in K_{N,J})\geq 1-\varepsilon$.

Set
\begin{equation}
X_j(t)=\int_{0}^{t} (\boldsymbol{\sigma}(s),\boldsymbol{w}_j(s))_{s} \mathrm{d}W(s).
\end{equation}
Recall $p>2$ in Theorem \ref{thm1}. By the Burkholder-Davis-Gundy inequality, the H\"older inequality and (\ref{eq Mj}), for every $n\geq 1$ and $0\leq s\leq t\leq T$,
\begin{align*}
\mathbb{E}|X_j(t)-X_j(s)|^{2n}&\leq \mathbb{E}|\int_{s}^{t} (\boldsymbol{\sigma}(l),\boldsymbol{w}_j(l))_{l} \mathrm{d}W(l)|^{2n}\\
&\leq  C_{n} \left(\int_{s}^{t} (\boldsymbol{\sigma}(l),\boldsymbol{w}_j(l))^2_{l} \mathrm{d}l\right)^{n}\\
& \leq C_{j,n} |t-s|^{\frac{n(p-2)}{p}}   (\int_0^T\|\boldsymbol{\sigma}(l) \|_l^pdl)^{\frac{2n}{p}}\\
&\leq C_{j,n}^{\prime} |t-s|^{\frac{n(p-2)}{p}}.
\end{align*}
The condition that $\boldsymbol{\sigma}\in \mathbb{L}^p([0,T];H_{t})$ has been used.

  Letting $n=\frac{4p}{p-2}$ and applying the Garsia lemma (see Corollary 1.2 in \cite{Walsh}), there exists a random variable $Y_j$ such that  with probability one, for all $0\leq s\leq t\leq T$,
\begin{align*}
|X_j(t,\omega)-X_j(s,\omega)|
&\leq Y_j(\omega) |t-s|^{\frac{p-2}{4p}},
\end{align*}
where
\begin{equation*}
\mathbb{E}(Y_j^{\frac{8p}{p-2}})< \infty.
\end{equation*}
It follows from (\ref{eq b}), (\ref{eq Mj}) and (\ref{component}) that, for $0\leq s\leq t\leq T$,
\begin{align}\label{component1}
&\ |(\boldsymbol{u}_m(t),\boldsymbol{w}_j(t))_{t}-(\boldsymbol{u}_m(s),\boldsymbol{w}_j(s))_{s}|\nonumber \\
\leq & \int_{s}^{t} \left|(\nabla \boldsymbol{u}_m(l),\nabla \boldsymbol{w}_j(l))_{l} \right|\mathrm{d}l+ \int_{s}^{t}\left|b_l\left(\boldsymbol{u}_m\left(l\right),\boldsymbol{u}_m\left(l\right),\boldsymbol{w}_j\left(l\right)\right) \right|\mathrm{d}l+\int_{s}^{t}\left| _{V^*_l}\left<\boldsymbol{f}(l),\boldsymbol{w}_j(l)\right>_{V_l} \right|\mathrm{d}l\nonumber\\
&+\int_{s}^{t}\left|( \boldsymbol{u}_m(l), \boldsymbol{w}_j^{\prime}(l))_l \right|\mathrm{d}l+|X_{j}(t)-X_{j}(s)|\nonumber\\
\leq & C_j\Big(\int_{s}^{t} \|\nabla \boldsymbol{u}_m(l)\|_{l}\mathrm{d}l+\int_{s}^{t}\|\boldsymbol{u}_m(l)\|_{l} \|\nabla \boldsymbol{u}_m(l)\|_{l}\mathrm{d}l+ \int_{s}^{t}\|\boldsymbol{u}_m (l)\|_{l} \mathrm{d}l+\int_{s}^{t}\|\boldsymbol{f}(l)\|^*_{{l}}\mathrm{d}l \Big)\nonumber\\
&+|X_{j}(t)-X_{j}(s)|\nonumber\\
 \leq &C_j\Big((t-s)^{1/2}(1+\sup_{0\leq l\leq T}\|\boldsymbol{u}_m (l)\|_{l})\Big(\int_{0}^{T}\|\nabla \boldsymbol{u}_m(l)\|_{l}^2\mathrm{d} l\Big)^{1/2}+(t-s)\sup_{0\leq l\leq T}\|\boldsymbol{u}_m (l)\|_{l}\nonumber\\
 &\quad\quad +(t-s)^{1/2} \Big(\int_{0}^{T}\|\boldsymbol{f}(l)\|_l^{*^2}\mathrm{d}l\Big)^{1/2}\Big)+Y_j|t-s|^{\frac{p-2}{4p}},
\end{align}
where $C_j$ depends on $M_j$ in (\ref{eq Mj}) and $\sup_{l\in[0,T]}|\mathcal{D}(l)|$, and is independent on $m,s,t$.
For $N>0$ and $q_j>0$, define
\begin{eqnarray*}
&&J_j^N=\Big\{g\in C([0,T];\mathbb{R}); \quad |g(t)-g(s)|\leq
C_j\Big((t-s)^{1/2}\{1+N\}N^{1/2}+(t-s)N\\
&&\quad\quad\quad +(t-s)^{1/2} \left\{\int_{0}^{T}\|\boldsymbol{f}(l)\|_l^{*^2}\mathrm{d}t\right\}^{1/2}\Big)+q_j|t-s|^{\frac{p-2}{4p}}\Big\}.
\end{eqnarray*}
Obviously $J_j^N$ is a class of equicontinuous functions. Now define the relatively compact subset
$K_{N,J}=\bigcap_{i=1}^{\infty}K_{N,J_i^N}$ as in Proposition 4.2 with $J_i$ replaced by $J_i^N$.
\vskip 0.3cm
 Recall (\ref{estimate1}) that
\begin{equation*}
\sup\limits_{m\geq 1}\left(\mathbb{E} \sup\limits_{0\leq t\leq T}\|\boldsymbol{u}_{m}(t)\|_{t}^2+\mathbb{E}\int_{0}^{T} \|\nabla\boldsymbol{u}_{m}(t)\|_{t}^2 \mathrm{d}t\right) \leq M<\infty.
\end{equation*}
If we denote
\begin{align*}
A_{m}^{N}&=\{\omega: \sup\limits_{ t\in[0,T]}\|\boldsymbol{u}_{m}(t,\omega)\|_{t}\leq N\},\\
B_{m}^{N}&=\{\omega: \int_{0}^{T} \|\nabla\boldsymbol{u}_{m}(t,\omega)\|_{t}^2\mathrm{d}t\leq N\},
\end{align*}
then
\begin{align*}
\mathbb{P}\left((A_{m}^{N}\cap B_{m}^{N})^{c}\right)\leq \mathbb{P}\left((A_{m}^{N})^{c}\right)+\mathbb{P}\left((B_{m}^{N})^{c}\right)\leq  M(\frac{1}{N^2}+\frac{1}{N}).
\end{align*}
Given $\varepsilon >0$.  We can choose $N$ sufficiently large such that
\begin{equation}\label{probability}
\mathbb{P}\left((A_{m}^{N}\cap B_{m}^{N})^{c}\right)\leq \frac{\varepsilon}{2}.
\end{equation}

For $i\geq 1$, define $D_i=\{\omega:Y_i(\omega)\leq q_i\}$. We can  take $q_i$ sufficiently large such that
\begin{equation*}
\mathbb{P}(D_i^c)\leq \frac{\mathbb{E}[Y_i^{\frac{8p}{p-2}}]}{q_i^{\frac{8p}{p-2}}}\leq \frac{\varepsilon}{2^{i+1}}.
\end{equation*}
Consequently we have
\begin{equation*}
\mathbb{P}\left(\left(A_m^{N}\cap B_{m}^{N}\cap(\cap_{i\geq 1} D_i)\right)^{c}\right)\leq \frac{\varepsilon}{2}+(\frac{\varepsilon}{2^2}+\cdots+\frac{\varepsilon}{2^{i+1}}+\cdots)\leq \varepsilon.
\end{equation*}
Using the fact that
$$A_m^{N}\cap B_{m}^{N}\cap(\cap_{i\geq 1} D_i)\subset \{\boldsymbol{u}_m\in K_{N,J}\}$$
we deduce that
\begin{equation*}
\mathbb{P}(\boldsymbol{u}_m\in K_{N,J})\geq 1-\varepsilon,
\end{equation*}
proving the tightness.
\end{proof}
\vskip 0.5cm
\noindent{ \bf Proof of Theorem \ref{thm1}}
\vskip 0.3cm
\begin{proof}
We have proved that $\{\mathscr{L}(\boldsymbol{u}_m), m\geq 1\}$ is tight in $\mathbb{L}^2([0,T];H_t)$. There exists a subsequence of $\{\boldsymbol{u}_m, m\geq 1\}$, still denoted by $\{\boldsymbol{u}_m, m\geq 1\}$, such that $\{\mathscr{L}(\boldsymbol{u}_m), m\geq 1\}$ converges  weakly in $\mathbb{L}^2([0,T];H_t)$. By the generalisation of the Skorohod representation theorem (Theorem C.1 in Appendix C in \cite{ZE}), there exists a probability space $({\Omega}^{*}, \mathbb{F}^{*}, \mathbb{P}^{*})$ and a sequence of $\mathbb{L}^2([0,T];H_t)$-valued random variables $\{{\boldsymbol{u}}_m^{*}, m\geq 1\}$ and $\boldsymbol{u}^{*}$, and $C([0,T];\mathbb{R})$-valued random variable $W^*$ such that $\mathbb{P}^{*}\circ(\boldsymbol{u}_m^{*},W^*)^{-1}= \mathbb{P}\circ(\boldsymbol{u}_m,W)^{-1}$ and that
$\boldsymbol{u}^{*}_m\to \boldsymbol{u}^{*}$  in $\mathbb{L}^2([0,T];H_t)$ $\mathbb{P}^{*}$-$a.s.$
By (\ref{estimate1}), we also have
\begin{equation*}
 \mathbb{E}^{\mathbb{P}^*} \sup_{0\leq t\leq T}\|\boldsymbol{u}_m^{*}(t)\|_{t}^2+\mathbb{E}^{\mathbb{P}^*}\int_{0}^{T}\|\boldsymbol{\nabla  u}_m^{*}(t)\|_t^2\mathrm{d} t \leq M.
\end{equation*}Hence,
\begin{equation*}
\boldsymbol{u}^{*}\in \mathbb{L}^2([0,T];V_t)\cap\mathbb{L}^\infty([0,T];H_t),\  \mathbb{P^{*}}\text{-}a.s.
\end{equation*}
and $\boldsymbol{u}_m^{*}\to \boldsymbol{u}^{*}$ weakly in  $\mathbb{L}^2(\Omega\times[0,T]; V_t)$.
 From the equation  (\ref{equ1}) satisfied by  $\widetilde{{\boldsymbol{u}}}_m$ and Remark \ref{Rem 3.2}, we see that for  $\boldsymbol{v}\in \mathscr {C}_{\sigma}(\mathcal{O}_T)= \{\boldsymbol{v}\in C_0^\infty(\mathcal{O}_{T}) | $  div $ \boldsymbol{v} =0$, $\boldsymbol{v} (T)=0$\},
\begin{align*}
&(\mathrm{d}\boldsymbol{{u}}^{*}_{m}(s),\boldsymbol{v}(s))_s+(\nabla \boldsymbol{u}^{*}_m(s), \nabla\boldsymbol{v}(s))_s\mathrm{d}s+(B_s(\boldsymbol{u}^{*}_m(s)),  \boldsymbol{v}(s))_s\mathrm{d}s\\
=&(\boldsymbol{f}(s),\boldsymbol{v}(s))_s\mathrm{d}s+(\boldsymbol{\sigma}^{*}(s),\boldsymbol{v}(s))_s\mathrm{d}W^*(s).
\end{align*}
Intergrating by parts, we have
\begin{align}\label{eq 2}
&-\int_{0}^{T}(\boldsymbol{u}^{*}_{m}(s),\boldsymbol{v}^{\prime}(s))_s\mathrm{d} s +\int_{0}^{T}(\nabla \boldsymbol{u}^{*}_m(s), \nabla\boldsymbol{v}(s))_s\mathrm{d}s+\int_{0}^{T}(B_s(\boldsymbol{u}^{*}_m(s)),  \boldsymbol{v}(s))_s\mathrm{d}s\\=&(\boldsymbol{u}_0^{*},\boldsymbol{v}(0))_0 +\int_{0}^{T}(\boldsymbol{f}(s),\boldsymbol{v}(s))_s\mathrm{d}s+\int_{0}^{T}(\boldsymbol{\sigma}^{*}(s),\boldsymbol{v}(s))_s\mathrm{d}W^*(s).
\nonumber
\end{align}
Since $\boldsymbol{v}\in \mathscr {C}_{\sigma}(\mathcal{O}_T)$ and $\lim\limits_{m\rightarrow\infty}\boldsymbol{u}^{*}_m= \boldsymbol{u}^{*}$  in $\mathbb{L}^2([0,T];H_t)$, $\mathbb{P}^{*}$-a.s.,
\begin{eqnarray*}
&&\Big|\int_{0}^{T}(B_s(\boldsymbol{u}^{*}_m(s)),  \boldsymbol{v}(s))_s\mathrm{d}s
-
 \int_{0}^{T}(B_s(\boldsymbol{u}^{*}(s)),  \boldsymbol{v}(s))_s\mathrm{d}s\Big|\\
&\leq&
\int_{0}^{T}\Big|(B_s(\boldsymbol{u}^{*}_m(s),\boldsymbol{v}(s)),\boldsymbol{u}^{*}_m(s)-\boldsymbol{u}^{*}(s))_s\Big|\mathrm{d}s\\
 &&+
 \int_{0}^{T}\Big|(B_s(\boldsymbol{u}^{*}_m(s)-\boldsymbol{u}^{*}(s),\boldsymbol{v}(s)),\boldsymbol{u}^{*}(s))_s\Big|\mathrm{d}s\\
&\leq&
C_{\boldsymbol{v}}\Big(\int_0^T\|\boldsymbol{u}^{*}_m(s)\|^2_s\mathrm{d}s\Big)^{1/2}
\Big(\int_0^T\|\boldsymbol{u}^{*}_m(s)-\boldsymbol{u}^{*}(s)\|^2_s\mathrm{d}s\Big)^{1/2}\\
&&+
C_{\boldsymbol{v}}\Big(\int_0^T\|\boldsymbol{u}^{*}(s)\|^2_s\mathrm{d}s\Big)^{1/2}
\Big(\int_0^T\|\boldsymbol{u}^{*}_m(s)-\boldsymbol{u}^{*}(s)\|^2_s\mathrm{d}s\Big)^{1/2}
\rightarrow 0,\ \text{as }m\rightarrow \infty,\ \mathbb{P}^{*}\text{-}a.s..
\end{eqnarray*}
Here $C_{\boldsymbol{v}}$ depends on $\sup_{(\boldsymbol{x},s)\in \mathcal{O}_T}\Big(|\frac{\partial \boldsymbol{v}(\boldsymbol{x},s)}{\partial x^1}|+|\frac{\partial \boldsymbol{v}(\boldsymbol{x},s)}{\partial x^2}|\Big)$.

Let $m\to \infty$ in (\ref{eq 2}) to obtain
\begin{align*}
&-\int_{0}^{T}(\boldsymbol{u}^{*}(s),\boldsymbol{v}^{\prime}(s))_s\mathrm{d} s +\int_{0}^{T}(\nabla\boldsymbol{u}^{*}(s), \nabla\boldsymbol{v}(s))_s\mathrm{d}s+\int_{0}^{T}(B_s(\boldsymbol{u}^{*}(s)),  \boldsymbol{v}(s))_s\mathrm{d}s\\
&=(\boldsymbol{u}_0^{*},\boldsymbol{v}(0))_0+\int_{0}^{T}(\boldsymbol{f}(s),\boldsymbol{v}(s))_s\mathrm{d}s+\int_{0}^{T}(\boldsymbol{\sigma}^{*}(s),\boldsymbol{v}(s))_s\mathrm{d}W^*(s).
\end{align*}
This shows that $u^*$ is a martingale solution to the stochastic Navier-Stokes equation (\ref{add}), completing the proof of the existence of a martingale solution.

\vskip 0.2cm
Next we will consider the uniqueness of the solution. Suppose there are two solutions of  (\ref{add}), denoted by $\boldsymbol{u}_1$ and $\boldsymbol{u}_2$, i.e., $\boldsymbol{u}_1$ and $\boldsymbol{u}_2$ satisfy Definition \ref{def solution} with $\boldsymbol{u}$ replaced by $\boldsymbol{u}_1$ and $\boldsymbol{u}_2$, respectively. In particular, we have
\begin{align*}
\mathrm{d}\boldsymbol{u}_1(t)-\Delta \boldsymbol{u}_1(t)\mathrm{d}t+\left(\boldsymbol{u}_1(t)\cdot\nabla\right)\boldsymbol{u}_1(t)\mathrm{d}t+\nabla p(t)\mathrm{d}t&=\boldsymbol{f}(t)\mathrm{d}t+\boldsymbol{\sigma}(t) \mathrm{d} W(t)
\end{align*}
and
\begin{align*}
\mathrm{d}\boldsymbol{u}_2(t)- \Delta \boldsymbol{u}_2(t)\mathrm{d}t+\left(\boldsymbol{u}_2(t)\cdot\nabla\right)\boldsymbol{u}_2(t)\mathrm{d}t+\nabla p(t)\mathrm{d}t&=\boldsymbol{f}(t)\mathrm{d}t+\boldsymbol{\sigma}(t) \mathrm{d} W(t)
\end{align*}
with $\boldsymbol{u}_1(0)=\boldsymbol{u}_2(0)=\boldsymbol{u}_0$.
Setting  $\boldsymbol{z}(t)=\boldsymbol{u}_1(t)-\boldsymbol{u}_2(t)$, then $\boldsymbol{z}(t)$ solves the deterministic equation
\begin{align*}
\partial_t \boldsymbol{z}(t)- \Delta \boldsymbol{z}(t) &=B_t(\boldsymbol{u}_2(t))-B_t(\boldsymbol{u}_1(t)),\\
\boldsymbol{z}(0)&=0.
\end{align*}
Equivalently,
\begin{align}\label{eq z}
\partial_t { \widetilde{\boldsymbol{z}}} (t)+G{ \widetilde{\boldsymbol{z}}} (t) - F { \widetilde{\boldsymbol{z}}}(t) &=\mathcal{N}(\boldsymbol{\widetilde{u}}_2(t))-\mathcal{N}(\boldsymbol{\widetilde{u}}_1(t)),\\
\boldsymbol{\widetilde{z}}(0)&=0.\nonumber
\end{align}
By Lemma \ref{lem 5},
\begin{equation*}
\frac{\mathrm{d}}{\mathrm{d}t}|\widetilde{\boldsymbol{z}}(t)|_t^2=2\left<\widetilde{\boldsymbol{z}}^{\prime}(t)+G\widetilde{\boldsymbol{z}}(t),\widetilde{\boldsymbol{z}}(t)\right>_t,
\end{equation*}
inserting (\ref{eq z}) into the above equation, we get that
\begin{align}\label{uniqueness}
\frac{1}{2} \frac{\mathrm{d}}{\mathrm{d}t}|\widetilde{\boldsymbol{z}}(t)|_t^2+|\nabla_h \widetilde{\boldsymbol{z}}(t)|_t^2 =\left<\mathcal{N}(\boldsymbol{\widetilde{u}}_2(t))-\mathcal{N}(\boldsymbol{\widetilde{u}}_1(t)),\widetilde{\boldsymbol{z}} (t)\right>_t.
\end{align}
For the term on the right, by (\ref{eq b}) and a change of variable, we have the following estimate
\begin{align*}
|\left<\mathcal{N}(\boldsymbol{\widetilde{u}}_1(t))-\mathcal{N}(\boldsymbol{\widetilde{u}}_2(t)),\widetilde{\boldsymbol{z}} (t)\right>_t|&=|b_t(\boldsymbol{z}(t),\boldsymbol{u}_2(t),\boldsymbol{z}(t))|\\
&\leq C_1 |\boldsymbol{\widetilde{z}}(t)|_t |\nabla_h \boldsymbol{\widetilde{z}}(t)|_{t}|\nabla_h \boldsymbol{\widetilde{u}}_2(t)|_{t}\\
&\leq |\nabla_h \widetilde{\boldsymbol{z}}(t)|_t^2+C_7 |\boldsymbol{\widetilde{z}}(t)|_t^2|\nabla_h \boldsymbol{\widetilde{u}}_2(t)|_{t}^2.
\end{align*}
Combined with (\ref{uniqueness}), we have
\begin{align*}
\frac{\mathrm{d}}{\mathrm{d}t}|\widetilde{\boldsymbol{z}}(t)|_t^2\leq 2 C_7 |\boldsymbol{\widetilde{z}}(t)|_t^2|\nabla_h \boldsymbol{\widetilde{u}}_2(t)|_{t}^2.
\end{align*}
Integrate on $[0,t]$ to obatin
\begin{equation*}
|\widetilde{\boldsymbol{z}}(t)|_t^2 \leq \int_{0}^{t} 2 C_7 |\boldsymbol{\widetilde{z}}(s)|_s^2|\nabla_h \boldsymbol{\widetilde{u}}_2(s)|_{s}^2 \mathrm{d} s.
\end{equation*}
Applying Gronwall's lemma, we obtain $\widetilde{\boldsymbol{z}}=0$, proving the pathwise  uniqueness. As the consequence of the Yamada-Watanable theorem, we have proved the existence of a unique strong solution in the probabilistic sense.

The proof of Theorem \ref{thm1} is complete.
\end{proof}
\end{section}
\vskip 0.5cm

\noindent{\bf Acknowledgement}. This work is partially  supported by NSFC (No. 11971456, 11721101), School Start-up Fund (USTC) KY0010000036.

\end{document}